\documentclass[12pt]{amsart}
\usepackage[utf8]{inputenc}

\usepackage{extsizes}
\usepackage{amssymb, amsmath}
\usepackage{fourier}
\usepackage{color}
\usepackage{hyperref}

\usepackage{geometry}
\usepackage{tikz}
\usetikzlibrary{positioning, shapes.geometric, arrows.meta}
\usepackage{amssymb}
\usepackage{amsmath}
\usepackage{amsthm}
\usepackage{amsfonts}
\usepackage{bbm}
\usepackage{enumitem} 
\usepackage{pgfplots}
\pgfplotsset{compat=1.18} 
\usepackage{booktabs}
\usepackage{graphicx}
\usepackage[T1]{fontenc}
\usepackage{doi}
\usepackage{float} 
\usepackage{xcolor}

\usepackage{bookmark}
\usepackage{hyperref}
\hypersetup{pdfstartview={FitH}}

\newcommand{\C}{\mathbb{C}}
\newcommand{\norm}[1]{\lVert #1 \rVert}
\newcommand{\abs}[1]{| #1 |}

\newcommand{\A}{\mathcal A}
\newcommand{\I}{\mathbf{1}}
\newcommand{\pos}{\A^{+}}
\newcommand{\Apos}{\A^{++}}
\newcommand{\Ap}{\A^{++}} 
\newcommand{\St}{S(\A)}
\newcommand{\Geomean}{\mathbin{\#}}

\newcommand{\Mkpp}{{\mathbb M_2}(\mathbb C)^{++}}

\newcommand{\sigone}{\sigma_{ch}}
\newcommand{\sigtwo}{\sigma_{kah}}
\newcommand{\sigthree}{\sigma_w}

\newcommand{\App}{\mathcal A^{++}}

\newtheorem{thm}{Theorem}[section]
\newtheorem{lem}[thm]{Lemma}
\newtheorem{prop}[thm]{Proposition}

\theoremstyle{definition}

\newtheorem{remark}[thm]{Remark}

\numberwithin{equation}{section}

\begin{document}
	
	\title[]
	{On interrelations among different versions of a Heron type mean and commutativity in $C^*$-Algebras}
	
	\author[L.~Moln\'ar]{Lajos Moln\'ar}
	
	\address{Bolyai Institute, University of Szeged, and HUN-REN–SZTE Analysis and Applications Research Group, Aradi v\'ertan\'uk tere 1.,
H-6720 Szeged}
	
	\email{molnarl@math.u-szeged.hu}
	
	\urladdr{http://www.math.u-szeged.hu/~molnarl}

	\author[T.~Zhang]{Teng Zhang}
	
	\address{School of Mathematics and Statistics, Xi'an Jiaotong University, Xi'an 710049, P. R. China}
		
	\email{teng.zhang@stu.xjtu.edu.cn}

	\subjclass[2020]{47A64, 47L07, 46L05.}
	\keywords{Heron mean; Kubo-Ando mean; Wasserstein mean; $C^*$-algebra; positive cone; central element; commutativity.}

\thanks{The first author was supported by the National Research, Development and Innovation Office of
Hungary, NKFIH, Grant No. $\text{ADVANCED}\_150059$. 
The second author was supported by the China Scholarship Council, the Young Elite Scientists Sponsorship Program for PhD Students (China Association for Science and Technology), and the Fundamental Research Funds for the Central Universities at Xi'an Jiaotong University (Grant No.~xzy022024045). 
}

	\begin{abstract}
The extension of the concept of a mean of positive real numbers to noncommutative settings, e.g., for Hilbert space operators, is a widely studied question. For example, in quantum information science, it is an important issue to find such extensions that fit the "best" to the studied physical problems. In fact, typically, there are many different ways of extension which, for commuting variables, all give the same value. In this paper, we are concerned with the converse: to what extent the coincidence of two extensions determines commutativity.
    Concretely, in our present work, we consider three different versions of the most common Heron type mean on the positive definite cone of a $C^*$-algebra: the Kubo-Ando type Heron mean, the naive or conventional version of the Heron mean, and the Wasserstein mean. We study equality relations among those objects and verify that they are closely connected to certain commutativity properties. They characterize either the commutativity of particular pairs of elements of a positive definite cone, or the centrality of positive definite elements, or the commutativity of the underlying algebra. 
	\end{abstract}
	
	\maketitle
    
	\section{Introduction}

	Throughout the paper, by a $C^*$-algebra we always mean a unital $C^*$-algebra with identity $\I$.
    Let $\A$ be such an algebra. We denote by $\A_s$ the linear space of all self-adjoint elements of $\A$, by $\A^{+}$ the set of all positive (in other words, positive semidefinite) elements of $\A$ (which are those self-adjoint elements that have nonnegative spectrum), and by $\App$ the set of all positive invertible (or, in other words, positive definite) elements of $\A$.
	We call $\A^+$ the positive semidefinite cone, and $\App$ the positive definite cone of $\A$. For any Hilbert space $H$, the $C^*$-algebra of all bounded linear operators on $H$ is denoted by $B(H)$.

    There has been, and there still is, extensive research concerning matrix and operator means defined on positive cones of matrices or operators. Let us just refer to the beautiful Kubo-Ando theory, where operator means are parameterized by operator monotone functions \cite{KA80}. Recently, much attention has also been paid to certain non Kubo-Ando type means. In this paper, we particularly deal with different versions (Kubo-Ando type, and non Kubo-Ando types) of a Heron mean.

    Recall that in the special case of positive real numbers, any affine combination of the arithmetic and geometric means is called a Heron mean. Here, we are interested in the most natural choice, which is half of the arithmetic mean plus half of the geometric mean. Part of the motivation for the present investigation comes from the current special interest in the Wasserstein mean (the definition is given below). To list some recent papers relevant to the study of that mean, we mention \cite{BJL,BJL2,1, 2,3,7,5,4,6}.
    
    Below, we consider the means in the general setting of $C^*$-algebras.
    Let us begin with the general notion of Kubo-Ando means. If $f$ is a positive operator monotone function on the set of positive real numbers, then the corresponding Kubo-Ando mean on the positive definite cone $\App$ is the two-variable transformation
    \begin{equation*}
        (A,B)\mapsto A^{1/2}f(A^{-1/2} BA^{-1/2})A^{1/2}
    \end{equation*}
    which, of course, can also be viewed as a particular binary operation on $\App$. If $f$ is the square-root function, then we obtain the Kubo-Ando type geometric mean: for any $A,B\in\Apos$, we set
	\begin{equation}\label{E:6}
	A\Geomean B = A^{1/2}\bigl(A^{-1/2}BA^{-1/2}\bigr)^{1/2}A^{1/2}.
	\end{equation}
    With this at hand, the Wasserstein mean is defined by
    \begin{equation}\label{eq:sig3}
    A\sigthree B= \frac14({A+B + A(A^{-1}\Geomean B) + (A^{-1}\Geomean B)A)}, \quad A,B\in \App.
    \end{equation}
    This mean, for matrices, was introduced in \cite{BJL}. Let us recall some fundamental properties of $\Geomean, \sigthree$ in that particular setting.
    Most notably, those means are connected to Riemannian metrics. Indeed, the Kubo-Ando type geometric mean $A\Geomean B$ is well-known to be the midpoint of the curve connecting $A$ and $B$ that is the geodesic in the so-called affine invariant Riemannian metric. A similar statement is true for the Wasserstein mean in another Riemannian structure  on the positive definite cone of matrices, where the geodesic distance is the so-called Bures metric (or Bures-Wasserstein metric), which plays an important role in the theory of quantum information, \cite{BJL, BJL2}.

    Apparently, for positive numbers, the Wasserstein mean coincides with the above mentioned Heron mean, which has two other straightforward extensions to the noncommutative setting.
    The first variant is the Kubo-Ando type mean  
    \begin{align*}
		A\sigtwo B
		=\frac14\bigl(A+B+2(A\Geomean B)\bigr)= A^{1/2}\left(\frac{\I+(A^{-1/2}BA^{-1/2})^{1/2}}{2}\right)^2A^{1/2}, \quad A,B\in \App
	\end{align*}
    which corresponds to the operator monotone function $t\mapsto ((1+\sqrt{t})/2)^2$ and, in matrix theory, it is usually called matrix Heron mean. (Actually, $\sigtwo$ is the Kubo-Ando power mean corresponding to the parameter value $p=1/2$.) The other one is the most naive and conventional variant, which is
	\begin{align*}
		A\sigone B= \left(\frac{A^{1/2}+B^{1/2}}{2}\right)^2, \quad A,B\in \App
		.
	\end{align*}
    Clearly, for commuting $A,B$, we have
    $A\sigtwo B=A\sigone B= A\sigthree B$.
    
    We point out that in the former paper \cite{MolnarSimon}, the authors investigated the above introduced Heron type means and provided several results that are mainly related to their algebraic properties (recall again that means are a sort of binary operations on positive definite cones). They also elaborated on interrelations of those means and formulated some open problems that we solve here. 
    
    The common feature of the results presented below is that the considered interrelations are equality relations among the means in question, and they are proved to be very closely connected to commutativity properties. Indeed, those relations characterize either the commutativity of given pairs of elements, or characterize the centrality of given elements of a positive definite cone, or characterize the commutativity of the underlying $C^*$-algebra.

    It is an essential novelty in our current investigations that we bring in a notable tool from the theory of $C^*$-algebras called the excision of pure states.

    \section{Equalities for a fixed pair of elements}

In Proposition 3 in \cite{MolnarSimon}, it was proved that if, in the $C^*$-algebra $\A$, for a given pair $A,B\in \App$ we have either
\begin{equation*}
A\sigtwo B=A\sigone B
\end{equation*} or 
\begin{equation*}
A\sigtwo B=A\sigthree B,
\end{equation*}
then $A,B$ necessarily commute. The natural question arises: what happens, in the same respect, to the means $\sigone, \sigthree$? This problem, which was raised as Problem 1 in \cite{MolnarSimon}, is more difficult, but the conclusion is the same as in the previous two cases, as we will see below.
In the proof, we make use of the following nice and highly nontrivial result due to Ando and Hayashi \cite{AH07}.

	\begin{thm}\label{lem:AH}
		Let $H$ be a Hilbert space and let $X,Y\in B(H)$. If
		\[
		\abs{X+Y}=\abs{X}+\abs{Y},
		\]
		then there exists a partial isometry $U\in B(H)$ such that
		\[
		X=U\abs{X}\qquad\text{and}\qquad Y=U\abs{Y}.
		\]
	\end{thm}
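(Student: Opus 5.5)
The plan is to reduce the statement to a single operator identity and then obtain the partial isometry from Douglas-type factorization. Put $P=\abs{X}$ and $Q=\abs{Y}$, and let $X+Y=W\abs{X+Y}=W(P+Q)$ be the polar decomposition, so that $W^*(X+Y)=P+Q$. Set $S=W^*X$ and $T=W^*Y$. Then
\[
S+T=P+Q,\qquad S^*S=X^*WW^*X\le X^*X=P^2,\qquad T^*T\le Q^2 .
\]
The key intermediate claim, which I would aim for, is
\[
S=P \qquad\text{and}\qquad T=Q .
\]

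Granting the claim, the conclusion follows with $U=W$. For any $\xi\in H$,
\[
\norm{P\xi}=\norm{W^*X\xi}\le\norm{X\xi}=\norm{P\xi},
\]
so equality holds throughout. Hence $X\xi$ lies in the final space of $W$, that is, $WW^*X\xi=X\xi$, and therefore $X=WW^*X=WP=W\abs{X}$. The same argument gives $Y=W\abs{Y}$.

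To prove the claim, I would first use Douglas' lemma to write $S=CP$ and $T=DQ$ with contractions $C$ and $D$. By Cauchy--Schwarz,
\[
\abs{\langle CP\xi,\xi\rangle}=\abs{\langle P^{1/2}\xi,P^{1/2}C^*\xi\rangle}\le\langle P\xi,\xi\rangle^{1/2}\langle CPC^*\xi,\xi\rangle^{1/2},
\]
and the analogous bound holds for $DQ$. Adding the two bounds and using $CP+DQ=P+Q\ge 0$, one more Cauchy--Schwarz step gives the operator inequality
\[
P+Q\le CPC^*+DQD^*.
\]
The aim is then to force equality in every one of these Cauchy--Schwarz inequalities.

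Equality would give $P^{1/2}C^*\xi$ parallel to $P^{1/2}\xi$ with the right phase, for every $\xi$, and then polarization would yield $CP=P$ and $DQ=Q$.

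I expect this equality step to be the main obstacle. In finite dimensions I would close it by comparing traces. We have $\operatorname{tr}(CPC^*)\le\operatorname{tr}P$ because $C^*C\le I$, and likewise $\operatorname{tr}(DQD^*)\le\operatorname{tr}Q$. Combined with the operator inequality, this forces $CPC^*+DQD^*=P+Q$, hence equality everywhere.

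For general $H$ there is no trace. There I would try to replace the trace argument by an argument on the positive block matrix
\[
\begin{pmatrix} P+Q & P+Q\\ P+Q & CPC^*+DQD^*\end{pmatrix}\ge 0 .
\]
The idea is to show that the defect $CPC^*+DQD^*-(P+Q)$ must vanish on the closure of the range of $P+Q$, and then to read off $S=P$ from the equality case of the $2\times2$ positivity, one vector at a time. If that fails, the fallback is to test the hypothesis against the vectors $(\xi,\xi)$ in the row operator $[X\ \ Y]$. This uses $\norm{(X+Y)\xi}=\norm{(P+Q)\xi}$ together with the identity
\[
X^*Y+Y^*X=PQ+QP,
\]
obtained by squaring the hypothesis, and the goal would be to upgrade it to $X^*Y=PQ$. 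The upgraded identity says that $[X\ \ Y]^*[X\ \ Y]=[P\ \ Q]^*[P\ \ Q]$. A partial isometry $U$ with $[X\ \ Y]=U[P\ \ Q]$ then exists, exactly as in the polar decomposition.
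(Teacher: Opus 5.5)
The paper gives no proof of this statement; it imports it from Ando and Hayashi. Judged on its own, the first part of your argument is sound. The identity $W^*(X+Y)=P+Q$ holds. The passage from $S=P$, $T=Q$ to $X=W\abs{X}$, $Y=W\abs{Y}$ is correct. In finite dimensions the trace comparison does force $CPC^*+DQD^*=P+Q$, that is, $(I-C)P(I-C)^*+(I-D)Q(I-D)^*=0$, whence $CP=P$ and $DQ=Q$.

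The gap is the infinite-dimensional case, for which you give no argument. This is the case that matters: the paper applies the theorem on the Gelfand--Naimark Hilbert space of an arbitrary $C^*$-algebra. Your two proposed substitutes do not work.

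\begin{itemize}
\item The block operator you write down is just the Schur-complement form of $P+Q\le CPC^*+DQD^*$, which you already have. So it cannot make the defect $(I-C)P(I-C)^*+(I-D)Q(I-D)^*$ vanish. The only thing that killed that defect was $\operatorname{tr}(CPC^*)\le\operatorname{tr}P$, and this has no operator-level replacement. For example, on $\ell^2(\mathbb N)$ take $C$ the backward shift and $P=\mathrm{diag}(p_n)$ with $p_n$ bounded and strictly increasing. Then $CPC^*-P=\mathrm{diag}(p_{n+1}-p_n)$ is positive and nonzero.
\item Your fallback, upgrading $X^*Y+Y^*X=PQ+QP$ to $X^*Y=PQ$, is a restatement of the theorem itself. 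It assumes exactly what has to be proved.
\end{itemize}

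One concrete missing ingredient is information you discarded when you passed to the contractions $C,D$. Put $M=P+Q$. The relation $WM=X+Y$ and the squared hypothesis give $S^*M+MS=X^*(X+Y)+(X+Y)^*X=PM+MP$. So $MR$ is skew-adjoint for $R=S-P$, which is equivalent to $P^2-S^*S=Q^2-T^*T$. Meanwhile $S^*S\le P^2$ reads $PR+R^*P\le -R^*R$.

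These two facts settle the case where $M$ is invertible. This is the situation in the proof of Theorem~\ref{thm:prob1}, where $\abs{A+Y}=A+\abs{Y}$ with $A$ invertible.

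\begin{itemize}
\item $R$ is similar via $M^{1/2}$ to the skew-adjoint operator $\tilde R=M^{-1/2}(MR)M^{-1/2}$. Hence $\sigma(R)\subseteq i\mathbb R$, and every spectral point $i\nu$ is an approximate eigenvalue of $R$.
\item Test $PR+R^*P\le -R^*R$ on unit approximate eigenvectors $u_n$. Since $\langle u_n,Pu_n\rangle$ is real, $2\operatorname{Re}\langle Ru_n,Pu_n\rangle\to 0$, while $\norm{Ru_n}^2\to\nu^2$. Hence $\nu=0$.
\item So $\tilde R$ is normal with spectrum $\{0\}$, hence $\tilde R=0$ and $R=0$.
\end{itemize}

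This replaces the trace argument, but only when $\abs{X+Y}$ is invertible. The general statement still needs a further argument that your proposal does not supply.
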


Our first result is formulated as follows.

	\begin{thm}\label{thm:prob1}
		Let $\A$ be a $C^*$-algebra and $A,B\in\Apos$.
		If 
        \begin{equation*}
        A\sigone B =A\sigthree B, 
        \end{equation*}
        then $AB=BA$.
	\end{thm}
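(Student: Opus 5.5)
The plan is to rewrite both sides of $A\sigone B=A\sigthree B$ as squares of moduli of operators, so that the equality becomes exactly the hypothesis $\abs{X+Y}=\abs{X}+\abs{Y}$ of Theorem~\ref{lem:AH}. Applying that theorem should then force commutativity.

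First I would set $G=A^{-1}\Geomean B\in\App$. By the Riccati characterization of the geometric mean, $G$ is the unique positive definite solution of $GAG=B$; this is checked directly from \eqref{E:6}. Substituting $GAG=B$ into \eqref{eq:sig3} gives the factorization
\[
4\,(A\sigthree B)=A+AG+GA+GAG=(\I+G)A(\I+G).
\]
Now put $X=A^{1/2}$ and $Y=A^{1/2}G$. Then
\[
(X+Y)^*(X+Y)=(\I+G)A(\I+G)=4\,(A\sigthree B),
\qquad
Y^*Y=GAG=B .
\]
Hence $\abs{X}=A^{1/2}$, $\abs{Y}=B^{1/2}$ and $\abs{X+Y}=2\,(A\sigthree B)^{1/2}$. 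On the other hand, $A\sigone B=\bigl((A^{1/2}+B^{1/2})/2\bigr)^2$ and $A^{1/2}+B^{1/2}\ge 0$. So, by uniqueness of positive square roots, the hypothesis $A\sigone B=A\sigthree B$ is equivalent to
\[
\abs{X+Y}=A^{1/2}+B^{1/2}=\abs{X}+\abs{Y}.
\]

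Next I would represent $\A$ faithfully as a $C^*$-subalgebra of some $B(H)$ (Gelfand--Naimark), with $\I$ mapped to the identity. Moduli and square roots are computed inside $\A$ by continuous functional calculus, so they are preserved under this representation. Theorem~\ref{lem:AH} then yields a partial isometry $U\in B(H)$ with $X=U\abs{X}$ and $Y=U\abs{Y}$. The first relation reads $A^{1/2}=UA^{1/2}$. Since $A^{1/2}$ is invertible, this gives $U=\I$. The second relation then becomes $A^{1/2}G=B^{1/2}$, so $A^{1/2}G$ is self-adjoint, which means $A^{1/2}G=GA^{1/2}$. Consequently $G$ commutes with $A$, and $B=GAG=G^2A$ commutes with $A$, i.e.\ $AB=BA$.

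The only non-routine step is spotting the factorization $4(A\sigthree B)=(\I+G)A(\I+G)$, which turns the Wasserstein mean into $\abs{X+Y}^2$. After that, Theorem~\ref{lem:AH} carries the argument, together with the observation that invertibility of $X$ pins the partial isometry down to $\I$. I would also double-check the identity $GAG=B$ for $G=A^{-1}\Geomean B$ at the start, since every later step depends on it.
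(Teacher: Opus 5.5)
Your proof is correct and follows essentially the paper's route: represent $\A$ on a Hilbert space, rewrite the hypothesis as the equality case $\abs{X+Y}=\abs{X}+\abs{Y}$ with $X$ positive invertible, use Theorem~\ref{lem:AH} to force $U=\I$ and hence $Y\ge 0$, and read off commutativity. The only difference is the choice of pair: the paper conjugates by $A^{1/2}$ and works with $(A,\,B^{1/2}A^{1/2})$, using $\abs{B^{1/2}A^{1/2}}=(A^{1/2}BA^{1/2})^{1/2}$, whereas your pair $(A^{1/2},\,A^{1/2}(A^{-1}\Geomean B))$ gives the identity directly from the factorization $4(A\sigthree B)=(\I+G)A(\I+G)$, which is a slightly cleaner presentation.
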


As the converse statement, i.e., that $AB=BA$ implies $A\sigone B =A\sigthree B$ is obviously true, this result actually characterizes the commuting elements of the positive definite cone $\App$. (Clearly, the same holds in relation to the above mentioned Proposition 3 in \cite{MolnarSimon}.)
    
	\begin{proof}
    We begin by mentioning that, by the Gelfand-Naimark theorem, $\A$ is isometrically *-isomorphic to a closed *-subalgebra of some $B(H)$ containing the identity operator $I$. Hence, it is sufficient to verify our statement only for such an algebra of Hilbert space operators, that is, only in relation with arbitrary invertible positive Hilbert space operators $A,B\in B(H)$.
		Set
		\[
		X=\big(A^{1/2}BA^{1/2}\big)^{1/2},\quad Y=B^{1/2}A^{1/2}.
		\]
		Since $A^{-1}\Geomean B=A^{-1/2}XA^{-1/2}$, the equality $A\sigone B=A\sigthree B$
		is equivalent to
		\begin{equation*}\label{eq:cross-new}
			A^{1/2}B^{1/2}+B^{1/2}A^{1/2}
			= A^{1/2}XA^{-1/2}+A^{-1/2}XA^{1/2}.
		\end{equation*}
		Multiplying this equality from the left and right by $A^{1/2}$, we obtain
		\begin{equation}\label{eq:cross-new2}
			AY+Y^*A=AX+XA.
		\end{equation}
Next, note that
		\[
		Y^*Y=A^{1/2}BA^{1/2}=X^2,
		\enskip\text{hence}\enskip \abs{Y}=(Y^*Y)^{1/2}=X.
		\]
Using \eqref{eq:cross-new2}, we compute
		\begin{align*}
			(A+Y)^*(A+Y)
			&=A^2+Y^*Y+AY+Y^*A\\
			&=A^2+X^2+AX+XA=(A+X)^2.
		\end{align*}
		Taking the positive square-root yields
		\begin{equation*}\label{eq:triangle}
			\abs{A+Y}=A+X=\abs{A}+\abs{Y}.
		\end{equation*}
        Since $A=\abs{A}$ is invertible, by Theorem~\ref{lem:AH} it follows that $Y=\abs{Y}$, from which we can infer that $Y$ is self-adjoint and then that $A,B$ commute. 
	\end{proof}

So, we know that if, for a given pair of positive definite elements, two of the three versions of the Heron mean that we consider in the paper are equal, then the elements in question need to commute. 

If the $C^*$-algebra $\A$ carries a faithful trace, even much more is true.

By a trace on $\A$, we mean a positive linear functional $\tau:\A\to \C$ which satisfies $\tau(XY)=\tau(YX)$, $X,Y\in \A$.
It is called faithful if for any $A\in \A^+$, the equality $\tau(A)=0$ implies $A=0$. 

If $\tau$ is a trace on $\A$, then the following inequalities hold.
	\begin{equation}\label{E:44}
       \tau(A\sigtwo B)\leq \tau(A \sigone B)\leq \tau(A\sigthree B), \quad A,B\in \App,
   \end{equation}
see \cite{MolnarSimon}, p. 219.
In Proposition 4 in \cite{MolnarSimon}, we proved that for any faithful trace $\tau$ on $\A$, if $A,B\in \App$ are given and satisfy
	either 
    \[
    \tau(A\sigone B)=\tau(A\sigtwo B)
    \]
    or 
    \[
    \tau(A \sigone B)= \tau(A\sigthree B), 
    \]
    then $A,B$ necessarily commute. Since the converse statements are obviously true, that gives a characterization of commuting elements of $\App$ in terms of single numerical equalities involving a faithful trace.

\begin{remark}
For curiosity, let us remark that the inequalities appearing in \eqref{E:44} can be used to characterize traces. We mean that using the method developed in \cite{HM}, one can show the following. If $\varphi$ is a positive linear functional on the $C^*$-algebra $\A$, then it is a trace if and only if, for example, the inequality 
\begin{equation*}\label{E:5}
       \varphi(A\sigtwo B)\leq \varphi(A\sigthree B), \quad A,B\in \App
   \end{equation*}
holds. We do not present the details here (it does not belong to the topic of the present paper), only mention this hopefully interesting fact to complement the inequalities in \eqref{E:44}.
\end{remark}

    \section{Equalities in the $C^*$-norm}\label{S:3}

Of course, most of the $C^*$-algebras do not carry nontrivial traces. Hence, motivated by the last part of the previous section, one can raise the question of what happens if we consider the $C^*$-norm in place of the trace norm above?
Obviously, one cannot expect such a conclusion as the ones concerning traces. Just think of the difference in the amount of information that the spectral norm and the trace of positive matrices contain.

    Let us next mention that it was pointed out in \cite{MolnarSimon} (see page 220 there) that we have the following inequalities
    \[
    \| A\sigtwo X\| \leq \|A\sigone X\|\leq \|A\sigthree X\|, \quad A,X\in \App.
    \]
    In Proposition 6 of that paper, it was shown in the specific case of the full operator algebra $B(H)$ over a Hilbert space $H$ that, in either of the above two inequalities, we have equality for a given $A$ and for all $X$ if and only if $A$ is a positive scalar multiple of the identity. The problem was also raised there (see Problem 2 in \cite{MolnarSimon}) whether, in the context of a general $C^*$-algebra, the same equalities characterize the centrality of the given element $A$. It turns out that the problem is surprisingly difficult; in the solution, which takes several pages, we need to employ various highly nontrivial tools, among others, the excision of pure states of $C^*$-algebras. 

The result reads as follows.
    
	\begin{thm}\label{thm:main}
		Let $\A$ be a $C^*$-algebra and $A\in\Ap$.
		Assume that we have either
		\[
		\norm{A\sigone X}=\norm{A\sigtwo X}, \quad X\in\Ap,
		\]
		or
		\[
		\norm{A\sigone X}=\norm{A\sigthree X},\quad X\in\Ap.
		\]
		Then $A$ is a central element in $\A$. (The converse statements hold trivially.) 
	\end{thm}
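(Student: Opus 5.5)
The strategy is to show that, if $A$ is not central, then some irreducible representation detects a vector that is not an eigenvector of $\pi(A)$. We then build test elements $X\in\Ap$ that behave, asymptotically, like large multiples of the rank-one projection onto that vector, and compare the second-order terms of $\norm{A\sigone X}$ and $\norm{A\sigtwo X}$ (resp.\ $\norm{A\sigthree X}$).

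\textbf{Reduction to pure states.} The direct sum of all irreducible representations of $\A$ is faithful. Hence $A$ is central as soon as $\pi(A)$ is a scalar for every irreducible representation $(\pi,H)$. In turn, $\pi(A)$ is a scalar as soon as every unit vector $\xi\in H$ is an eigenvector of $\pi(A)$. So it suffices to show the following: for every pure state $\varphi=\langle\pi(\cdot)\xi,\xi\rangle$, the norm equality forces an identity between $\varphi(A^{1/2})$ and a quantity of the type $\varphi(A^{-1})^{-1/2}$ (for $\sigtwo$), or the analogous expression coming from $\sigthree$. By the operator Jensen/Cauchy--Schwarz inequality, such an identity holds only if $\xi$ is an eigenvector of $\pi(A)$.

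\textbf{Test elements via excision.} By the excision theorem for pure states (Akemann--Anderson--Pedersen), there are positive norm-one elements $E_n\in\A$ with
\[
\norm{E_nYE_n-\varphi(Y)E_n^2}\to 0 \quad\text{for all } Y\in\A .
\]
I would take $X=\varepsilon\I+tE_n^2\in\Ap$ (or $E_n^{2}$ replaced by a suitable functional-calculus modification) with $t\to\infty$ and $\varepsilon\to0$. In $B(H)$ this mimics $X=tP_\xi$, where $P_\xi$ is the projection onto $\C\xi$, and one expects the following expansions.
\begin{itemize}
\item For $\sigone$: $\norm{A\sigone X}=\tfrac t4+\tfrac{\sqrt t}{2}\,\varphi(A^{1/2})+O(1)$, since $(A^{1/2}+\sqrt t E_n)^2$ compressed by $E_n$ gives the first-order term $\sqrt t\,\varphi(A^{1/2})$.
\item For $\sigtwo$: $\norm{A\sigtwo X}=\tfrac t4+\tfrac{\sqrt t}{2}\,\varphi(A^{-1})^{-1/2}+O(1)$, because $A\Geomean tP_\xi=\sqrt t\,\langle A^{-1}\xi,\xi\rangle^{-1/2}P_\xi$.
\item For $\sigthree$: the analogous computation uses $A^{-1}\Geomean X$. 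It gives a leading correction involving $\varphi$ evaluated on $A$ and $A^{-1}$ (something like $\tfrac{\sqrt t}{4}\bigl(\varphi(A)\varphi(A^{-1})\bigr)^{1/2}$ times a factor). This is again comparable with $\varphi(A^{1/2})$ via Cauchy--Schwarz, with equality only for eigenvectors.
\end{itemize}
Equating the $\sqrt t$ coefficients for all pure states then finishes the argument as in the first step.

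\textbf{Main obstacle.} The hard part is making these norm asymptotics rigorous inside an abstract $C^*$-algebra. The norm is a supremum over \emph{all} representations, so I must show that the dominant contribution is captured by the excised state $\varphi$. The upper bound must be controlled uniformly by the excision estimates; the lower bound is obtained by evaluating against states close to $\varphi$. A further difficulty is that the square roots and geometric means appearing in the means are nonlinear in $E_n$. I would first handle this by approximating $s\mapsto\sqrt s$ by polynomials on the relevant spectra, so that the excision relation can be applied term by term. I would then control the error uniformly in $n$ for fixed $t,\varepsilon$ before letting $t\to\infty$. An alternative, if the asymptotics prove too delicate, is to work directly in the irreducible representation: use Kadison transitivity to produce elements whose image under $\pi$ has the desired compressions. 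One still needs excision to guarantee that $\norm{\cdot}$ in $\A$ is attained near $\pi$.
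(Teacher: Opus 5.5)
Your architecture is the paper's: excise a pure state $\psi$ by some $0\le E\le\I$ with $\psi(E)=1$, test with $X=tE^2$, compare the $\sqrt t$-coefficients, and finish with a strict Jensen/Cauchy--Schwarz equality case in irreducible representations. That last step is a valid substitute for Lemma~\ref{lem:MS-L7}. With $X=tE^2$ you have exactly $X^{1/2}=\sqrt t\,E$ and $A\Geomean X=\sqrt t\,(A\Geomean E^2)$, so the $t$-dependence is explicit. The $\varepsilon\I$ regularization is unnecessary once the norm identity is extended to $X\in\A^+$ by continuity.

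There is, however, a genuine gap: the step you call the ``main obstacle'' is the substance of the proof, and you do not carry it out. Your expansions are heuristics for $tP_\xi$ in $B(H)$. For an excising $E$, the $\sqrt t$-coefficient is only approximately $\frac12\psi(A^{1/2})$, so you need an argument that identifies it up to the excision error.

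The missing idea is to work with states rather than representations. Since $\norm{T}=\max_{\varphi\in\St}\varphi(T)$ for $T\ge 0$, a weak$^*$-compactness argument gives, for fixed $E$,
$\lim_{s\to\infty}\bigl(\norm{C+s^2E^2+sB}-s^2\bigr)/s=\sup\{\varphi(B):\varphi\in\St,\ \varphi(E)=1\}$
(Lemmas~\ref{lem:asymptotic} and \ref{lem:Me-sqrt}). The norm identity therefore becomes an exact equality of suprema over the face $M_E$ (Proposition~\ref{prop:E2E3}). Next, the Cauchy--Schwarz compression $\varphi(Y)=\varphi(EYE)$ for $\varphi\in M_E$ (Lemma~\ref{lem:phi-compression}) shows that \emph{every} state in this face, not only $\psi$, satisfies $|\varphi(Y)-\psi(Y)|\le\norm{EYE-\psi(Y)E^2}$. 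This is what controls the upper bound, and no Kadison transitivity is needed.

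For the nonlinear term, your polynomial approximation would work. For $p(0)=0$, the element $A^{1/2}p(A^{-1/2}E^2A^{-1/2})A^{1/2}$ is a combination of words $E(EA^{-1}E)^kE$, each close to $\psi(A^{-1})^kE^{2k+2}$. The paper does this in one line via the Ando--Kittaneh--Kosaki bound $\norm{S^{1/2}-T^{1/2}}\le\norm{S-T}^{1/2}$, which gives $\norm{E-\psi(A^{-1})^{1/2}\,A\Geomean E^2}\le\epsilon^{1/2}\norm{A}\norm{A^{-1}}^{1/2}$ (Lemma~\ref{L:1}).

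Your $\sigthree$ bullet is also incorrect. Since $A^{-1}\Geomean tP_\xi=\sqrt t\,\langle A\xi,\xi\rangle^{-1/2}P_\xi$, one has
$4(A\sigthree tP_\xi)=A+tP_\xi+\sqrt t\,\langle A\xi,\xi\rangle^{-1/2}(AP_\xi+P_\xi A)$,
whose norm is $t+2\sqrt t\,\langle A\xi,\xi\rangle^{1/2}+O(1)$. So $\psi(A^{-1})$ does not enter at all. The identity to prove is $\psi(A^{1/2})=\psi(A)^{1/2}$ (Proposition~\ref{prop:pure-identities}(b)). It forces eigenvectors via $\langle A^{1/2}\xi,\xi\rangle\le\norm{A^{1/2}\xi}$. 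Correspondingly, in the abstract argument you must excise $A$ rather than $A^{-1}$, and apply the geometric-mean estimate with $A^{-1}$ in place of $A$ to get $A^{-1}\Geomean E^2\approx\psi(A)^{-1/2}E$. Then use $\varphi(AE+EA)=2\varphi(A)$ for $\varphi\in M_E$.
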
 

The argument to verify the theorem rests, basically, on the next proposition. Let us recall the following. A state of $\A$ is a positive linear functional $\varphi:\A\to\C$ with $\varphi(\I)=1$. We denote the state space of $\A$, i.e., the convex set of all states of $\A$, by $\St$. It is a weak*-compact subset of the unit ball of $\A^*$. The extreme points of $\St$ are called pure states, and it is well-known that a state is pure exactly when the corresponding GNS construction yields an irreducible representation.

The crucial observation is the following.

\begin{prop}\label{prop:pure-identities}
		Given an $A\in\Ap$, the following hold true.
		\begin{enumerate}[label=(\alph*)]
			\item If $\norm{A\sigone X}=\norm{A\sigtwo X}$ for all $X\in\Ap$, then every pure state $\psi$ on $\A$ satisfies
			\begin{equation}\label{eq:pure2}
				\psi(A^{1/2})=\frac{1}{\sqrt{\psi(A^{-1})}}.
			\end{equation}
			\item If $\norm{A\sigone X}=\norm{A\sigthree X}$ for all $X\in\Ap$, then every pure state $\psi$ on $\A$ satisfies
			\begin{equation}\label{eq:pure3}
				\psi(A^{1/2})=\sqrt{\psi(A)}.
			\end{equation}
		\end{enumerate}
	\end{prop}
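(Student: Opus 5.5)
The plan is to compare the two norms asymptotically along a family of test elements $X$ that approximately "concentrate" on the given pure state $\psi$, and to read off the coefficient of the first-order term in a large parameter $t$.

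\textbf{Model case.} Suppose first that $\A\subseteq B(H)$, that $\psi(T)=\langle T\xi,\xi\rangle$ for a unit vector $\xi$, and that the rank-one projection $P=\xi\otimes\xi$ is available. For $X=t^2P$ (perturbed to $t^2P+\varepsilon I$ to make it invertible, then letting $\varepsilon\to0$) one has the classical formula
\[
A\Geomean P=\langle A^{-1}\xi,\xi\rangle^{-1/2}P .
\]
Consequently
\[
4(A\sigtwo t^2P)=A+t^2P+2t\,\psi(A^{-1})^{-1/2}P,
\]
whose norm is $t^2+2t\,\psi(A^{-1})^{-1/2}+O(1)$. Similarly $A^{-1}\Geomean t^2P=t\,\psi(A)^{-1/2}P$, so
\[
4(A\sigthree t^2P)=A+t^2P+t\,\psi(A)^{-1/2}(AP+PA).
\]
By first-order perturbation of the top eigenvalue $t^2$ of $t^2P$, its norm is $t^2+2t\,\psi(A)^{-1/2}\psi(A)+O(1)=t^2+2t\sqrt{\psi(A)}+O(1)$. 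On the other hand,
\[
4\norm{A\sigone t^2P}=\norm{A^{1/2}+tP}^2=\bigl(t+\psi(A^{1/2})+O(1/t)\bigr)^2=t^2+2t\,\psi(A^{1/2})+O(1).
\]
Equating coefficients of $t$ under the respective hypotheses gives \eqref{eq:pure2} and \eqref{eq:pure3}.

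\textbf{General $C^*$-algebra.} Here $P$ need not lie in $\A$, and this is where excision of pure states enters. By the Akemann--Anderson--Pedersen excision theorem, for the pure state $\psi$ there is a net $(E_\lambda)$ of positive norm-one elements of $\A$ with $\psi(E_\lambda)=1$ and $\norm{E_\lambda(T-\psi(T))E_\lambda}\to0$ for every $T\in\A$. One may also arrange that functional calculus images $f(E_\lambda)$ with $f(1)=1$ share these properties, so that the $E_\lambda$ behave like "approximate $P$". We then take $X=(\varepsilon\I+tE_\lambda)^2$ and pass to the GNS representation $\pi_\psi$ (irreducible, with cyclic vector $\xi_\psi$), or work directly with norms in $\A$.

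\textbf{The two estimates needed.} For each of the three means we need two-sided estimates.
\begin{enumerate}[label=(\roman*)]
\item \emph{Lower bounds} come from evaluating the state: for instance $\norm{A^{1/2}+\varepsilon\I+tE_\lambda}\ge \psi(\cdot)=t+\psi(A^{1/2})+\varepsilon$. Analogous vector-state estimates apply for the other means, using $\psi(E_\lambda)=1$, which forces $\xi_\psi$ into the eigenspace of $\pi_\psi(E_\lambda)$ for eigenvalue $1$.
\item \emph{Upper bounds} of the form $t^2+2tc+O(1)$, uniform in $\lambda$, for the second-order expansion of the largest spectral value. These follow from the excision property: on the spectral region where $E_\lambda$ is close to $1$, the compressed operator $E_\lambda TE_\lambda$ is nearly $\psi(T)E_\lambda^2$.
\end{enumerate}

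For $\sigtwo$ and $\sigthree$ we must also control $A\Geomean X$ and $A^{-1}\Geomean X$. The plan is to use the Riccati characterization: $A\Geomean X$ is the unique $G\in\App$ with $GA^{-1}G=X$. One verifies that $G\approx t\,\psi(A^{-1})^{-1/2}E_\lambda$ up to errors that are $o(t)$ in the relevant compressions.

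\textbf{Conclusion and main obstacle.} Combining the estimates, the hypothesis $\norm{A\sigone X}=\norm{A\sigtwo X}$ (respectively $=\norm{A\sigthree X}$) yields, after dividing by $t$ and letting $\lambda\to\infty$, then $t\to\infty$, then $\varepsilon\to0$, the equality of the linear coefficients $\psi(A^{1/2})=\psi(A^{-1})^{-1/2}$ (respectively $\psi(A^{1/2})=\sqrt{\psi(A)}$). The main obstacle is step (ii): obtaining uniform upper bounds for the norms of the geometric-mean expressions. Unlike the model case, these are not exactly rank-one computations, and the error terms must be shown to be $o(t)$ uniformly in $\lambda$. I would first try to reduce to the GNS representation and invoke Kadison transitivity together with excision, so that the computation approximates the rank-one case closely enough.
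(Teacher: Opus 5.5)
\textbf{Genuine gap.} Your rank-one computation is correct, but it only applies when the projection $P$ lies in $\A$. In the general case, the two steps that carry the proof are asserted rather than proved; you flag (ii) yourself as the main obstacle.

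\emph{First gap: approximating the geometric mean.} You claim $A\Geomean X\approx t\,\psi(A^{-1})^{-1/2}E_\lambda$. Excision only tells you that $G_0=t\,\psi(A^{-1})^{-1/2}E$ nearly solves the Riccati equation, i.e.\ $G_0A^{-1}G_0\approx X$. To conclude that $G_0$ is close to the true solution, you need a quantitative continuity estimate for the operator square root, and you do not supply one. The paper does this in Lemma~\ref{L:1}. Put $B=A^{-1/2}EA^{-1/2}$. Then the bound $\norm{EA^{-1}E-\psi(A^{-1})E^2}\le\epsilon$ becomes $\norm{B^2-\psi(A^{-1})BAB}\le\epsilon\norm{A^{-1}}$. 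The Ando--Kittaneh--Kosaki inequality $\norm{f(S)-f(T)}\le f(\norm{S-T})$ with $f(s)=s^{1/2}$ then gives the norm estimate $\norm{E-\psi(A^{-1})^{1/2}\,A\Geomean E^2}\le\epsilon^{1/2}\norm{A}\norm{A^{-1}}^{1/2}$. The same lemma, applied with $A^{-1}$ and $\psi(A)$, handles the Wasserstein term.

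\emph{Second gap: the order of limits.} Because you send $\lambda\to\infty$ before $t\to\infty$, you need second-order upper bounds that are uniform in $\lambda$, and you do not have them. The missing idea is to reverse the order. Fix one excising $E$ with $0\le E\le\I$ and $\psi(E)=1$, and let $t\to\infty$ first. By weak$^*$-compactness of $\St$, one has exactly $\lim_{t\to\infty}(\norm{C+t^2Y+tB}-t^2)/t=\sup_{\varphi\in M_Y}\varphi(B)$ (Lemma~\ref{lem:asymptotic}). Take $X=t^2E^2$; no $\varepsilon\I$ is needed, since the hypothesis extends to $X\in\pos$ by continuity. Using $M_E=M_{E^2}$, the norm hypothesis turns into the exact identity $\sup_{M_E}\varphi(A^{1/2})=\sup_{M_E}\varphi(A\Geomean E^2)$, respectively $\sup_{M_E}\varphi(A^{1/2})=\frac12\sup_{M_E}\varphi(A_{E^2})$.

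Next, $\varphi(E)=1$ forces $\varphi(T)=\varphi(ETE)$, so every $\varphi\in M_E$ satisfies $|\varphi(T)-\psi(T)|\le\norm{ETE-\psi(T)E^2}$. Combined with the norm estimate above, the left-hand supremum is within $\epsilon$ of $\psi(A^{1/2})$. The right-hand one is within $O(\epsilon^{1/2})$ of $\psi(A^{-1})^{-1/2}$ (respectively $\psi(A)^{1/2}$). Only then do you let $\epsilon\to0$. This route needs no uniformity in $E$, no GNS analysis, and no Kadison transitivity.
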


We will prove this proposition later, the proof has several ingredients. Supposing this proposition is proved, to the proof of Theorem \ref{thm:main}, we will need the next assertion, whose proof we can present right away.

\begin{prop}\label{P:1}
Assume that $f:(0,\infty) \to \mathbb R$ is a continuous function, which is not affine on any nontrivial subinterval of $(0,\infty)$. If $A\in \App$ is such that for every pure state $\psi$ on $\A$ we have
\[
\psi(f(A))=f(\psi(A)),
\]
then $A$ is a central element of $\A$.
\end{prop}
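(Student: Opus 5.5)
The plan is to pass to irreducible representations and show that $A$ is represented by a scalar in each of them. Fix a pure state $\psi$ and let $(\pi,H,\Omega)$ be its GNS representation; since $\psi$ is pure, $\pi$ is irreducible. A standard fact (via Kadison's transitivity theorem, or because $\pi(\A)'=\mathbb C I$) is that for every unit vector $\xi\in H$ the vector state $C\mapsto\langle \pi(C)\xi,\xi\rangle$ is again a pure state of $\A$. The continuous functional calculus commutes with $*$-homomorphisms, so $\pi(f(A))=f(\pi(A))$. Writing $T=\pi(A)$, which is positive and invertible with spectrum in $(0,\infty)$, the hypothesis therefore gives
\[
\langle f(T)\xi,\xi\rangle=f(\langle T\xi,\xi\rangle)\qquad\text{for all unit vectors }\xi\in H .
\]

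Next I would show that $T$ is a scalar multiple of $I$. Since $T$ is self-adjoint, it suffices to show that $\sigma(T)$ is a single point. Suppose instead that $a<b$ both lie in $\sigma(T)$, and choose $\varepsilon>0$ with $a+\varepsilon<b-\varepsilon$. Let $E$ be the spectral measure of $T$. The spectral projections $E((a-\varepsilon,a+\varepsilon))$ and $E((b-\varepsilon,b+\varepsilon))$ are nonzero and mutually orthogonal, so we may pick unit vectors $\eta,\zeta$ in their respective ranges. Both ranges reduce $T$ and $f(T)$, hence
\[
\langle T\eta,\zeta\rangle=0\quad\text{and}\quad \langle f(T)\eta,\zeta\rangle=0 .
\]
Put $x_1=\langle T\eta,\eta\rangle\in[a-\varepsilon,a+\varepsilon]$ and $x_2=\langle T\zeta,\zeta\rangle\in[b-\varepsilon,b+\varepsilon]$; then $x_1<x_2$.

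For $\lambda\in[0,1]$ consider the unit vector $\xi=\sqrt{\lambda}\,\eta+\sqrt{1-\lambda}\,\zeta$. Because the cross terms vanish, the displayed identity applied to $\xi$ yields
\[
f(\lambda x_1+(1-\lambda)x_2)=\lambda\langle f(T)\eta,\eta\rangle+(1-\lambda)\langle f(T)\zeta,\zeta\rangle .
\]
Applying the same identity to $\eta$ and to $\zeta$ themselves gives
\[
\langle f(T)\eta,\eta\rangle=f(x_1)\quad\text{and}\quad \langle f(T)\zeta,\zeta\rangle=f(x_2).
\]
Thus $f$ is affine on the nontrivial interval $[x_1,x_2]\subset(0,\infty)$, contradicting the assumption on $f$. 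Hence $\pi(A)=c_\psi I$ for some scalar $c_\psi$.

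Finally, for any $C\in\A$ we get $\pi(AC-CA)=0$ for every GNS representation of a pure state. The direct sum of these representations is faithful, because pure states separate the points of a $C^*$-algebra (Krein--Milman together with the existence of a state attaining the norm on $(AC-CA)^*(AC-CA)$). Therefore $AC=CA$ for all $C\in\A$, so $A$ is central.

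The only genuinely nontrivial ingredient is the purity of vector states in an irreducible representation, which I would quote as standard; the rest is the two-point spectral argument above.
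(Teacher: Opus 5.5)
Your argument is correct and follows the paper's route. You pass to an irreducible representation (the GNS representation of a pure state, where the paper takes an arbitrary irreducible one), use that vector states there are pure, show that $\pi(A)$ is a scalar, and conclude centrality because these representations separate the points of $\A$. The one difference is that the paper simply invokes Lemma~\ref{lem:MS-L7} for the Hilbert-space step, whereas you prove that step directly by testing the identity on $\sqrt{\lambda}\,\eta+\sqrt{1-\lambda}\,\zeta$, with $\eta,\zeta$ taken from orthogonal spectral subspaces near two distinct spectral points; this is a valid self-contained replacement.
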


To prove this proposition, we recall the following closely related statement that appeared as Lemma 7 in \cite{MolnarSimon}.

\begin{lem}\label{lem:MS-L7}
		Let $H$ be a Hilbert space, and let $f:(0,\infty)\to\mathbb R$ be a continuous function that is not affine on any nontrivial subinterval of $(0,\infty)$.
		If $T\in B(H)^{++}$ is such that for every unit vector $x\in H$ we have
		\[
		\langle f(T)x,x\rangle=f(\langle Tx,x\rangle),
		\]
		then $T$ is a positive scalar multiple of the identity.
	\end{lem}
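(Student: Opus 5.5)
The idea is to argue by contradiction: if $T$ is not a scalar multiple of the identity, the spectral theorem gives two orthogonal unit vectors lying in orthogonal $T$-invariant subspaces, and along the segment of unit vectors joining them the hypothesis forces $f$ to be affine on a nontrivial interval.

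Suppose $T\in B(H)^{++}$ is not a positive scalar multiple of $I$. Since $T$ is self-adjoint, a one-point spectrum would force $T=\lambda I$. Hence $\sigma(T)\subset(0,\infty)$ contains two points $a<b$.

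Fix $\varepsilon>0$ with $a+\varepsilon<b-\varepsilon$ and $a-\varepsilon>0$. Let $E$ be the spectral measure of $T$, and set
\[
P=E\bigl((a-\varepsilon,a+\varepsilon)\bigr),\qquad Q=E\bigl((b-\varepsilon,b+\varepsilon)\bigr).
\]
Both projections are nonzero, because $a,b\in\sigma(T)$ and spectral projections of open neighbourhoods of spectral points do not vanish. They are orthogonal, and they commute with every bounded Borel function of $T$, in particular with $T$ and $f(T)$. Choose unit vectors $u\in PH$ and $v\in QH$.

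Put $\alpha=\langle Tu,u\rangle\in[a-\varepsilon,a+\varepsilon]$ and $\beta=\langle Tv,v\rangle\in[b-\varepsilon,b+\varepsilon]$; then $\alpha<\beta$. For $t\in[0,1]$ define the unit vector
\[
x_t=\sqrt{1-t}\,u+\sqrt t\,v .
\]
For $g=\mathrm{id}$ and $g=f$, the cross terms vanish: $\langle g(T)u,v\rangle=\langle g(T)Pu,Qv\rangle=\langle Q g(T)Pu,v\rangle=\langle g(T)QPu,v\rangle=0$. Therefore
\[
\langle g(T)x_t,x_t\rangle=(1-t)\langle g(T)u,u\rangle+t\langle g(T)v,v\rangle .
\]

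Apply the hypothesis to $x_t$, $u$ and $v$:
\[
f\bigl((1-t)\alpha+t\beta\bigr)=\langle f(T)x_t,x_t\rangle=(1-t)\langle f(T)u,u\rangle+t\langle f(T)v,v\rangle=(1-t)f(\alpha)+tf(\beta)
\]
for all $t\in[0,1]$. Thus $f$ is affine on the nontrivial interval $[\alpha,\beta]\subset(0,\infty)$, contradicting the assumption on $f$. Hence $T$ is a positive scalar multiple of the identity.

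The only point needing care is the choice of $u,v$. Eigenvectors need not exist, and approximate eigenvectors alone would leave nonzero cross terms. Taking vectors in ranges of disjoint spectral projections resolves this: the cross terms vanish exactly for both $T$ and $f(T)$, and $\alpha<\beta$ is guaranteed by the choice of $\varepsilon$.
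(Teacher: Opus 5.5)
Your argument is correct and complete. You justify the two-point spectrum $a<b$ and the nonvanishing of $P=E((a-\varepsilon,a+\varepsilon))$ and $Q=E((b-\varepsilon,b+\varepsilon))$. The cross terms $\langle g(T)u,v\rangle$ vanish exactly for $g=\mathrm{id}$ and $g=f$, because $PQ=0$ and both projections commute with every Borel function of $T$. Your choice $\varepsilon<\min\{a,(b-a)/2\}$ gives $0<\alpha<\beta$, so $[\alpha,\beta]$ is a nontrivial subinterval of $(0,\infty)$ on which $f$ is forced to be affine.

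The paper gives no proof of this lemma to compare against. It quotes the result as Lemma 7 of \cite{MolnarSimon} and uses it as a black box in the proof of Proposition \ref{P:1}, so your proof serves as a self-contained justification. It has three merits:
\begin{enumerate}
\item It works unchanged in infinite dimensions, where eigenvectors need not exist.
\item Continuity of $f$ is used only to make sense of $f(T)$.
\item The hypothesis is needed only along the segment $x_t$ in $\mathrm{span}\{u,v\}$.
\end{enumerate}
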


\begin{proof}[Proof of Proposition \ref{P:1}]
		Let $\pi:\A\to B(H)$ be any irreducible $^*$-representation and
		$x\in H$ any unit vector. Then the vector state
		\[
		\omega_x(A)=\langle \pi(A)x,x\rangle, \quad A\in \A
		\]
		is known to be a pure state on $\A$. Indeed, we have that $x$ is necessarily a cyclic vector of $\pi$ and the GNS construction corresponding to $\omega_x$ is unitarily equivalent to $\pi$, and hence is irreducible. This verifies that the state $\omega_x$ is pure.

        After this, by the conditions in the proposition, it follows that 
        \[
        \langle f(\pi(A))x,x\rangle=\langle \pi(f(A))x,x\rangle=\omega_x(f(A))=f(\omega_x(A))=f(\langle \pi(A)x,x\rangle).
        \]
        Applying Lemma \ref{lem:MS-L7}, we obtain that the operator $\pi(A)$ is necessarily a scalar multiple of the identity. Therefore, it follows that $\pi(AX-XA)=\pi(A)\pi(X)-\pi(X)\pi(A)=0$ holds for any $X\in \A$ and irreducible representation $\pi$ of $\A$. This gives us that $AX-XA=0$ for any $X\in \A$, i.e., $A$ is a central element of $\A$.
\end{proof}
    
Apparently, Proposition \ref{prop:pure-identities} and Proposition \ref{P:1} together will result in Theorem \ref{thm:main}.
So, our remaining task is to verify Proposition \ref{prop:pure-identities}. Its proof rests on some other statements. Before presenting them, we recall some facts and introduce a notation.

    For any $T\in \A^+$, we have
	\begin{equation}\label{eq:norm-by-states}
		\norm{T}=\sup_{\varphi\in\St}\varphi(T).
	\end{equation}
	Since $\St$ is weak$^*$-compact and the map $\varphi\mapsto\varphi(T)$ is weak$^*$-continuous,
	the supremum in \eqref{eq:norm-by-states} is actually attained. Next, for an arbitrary $Y\in\pos$ with $\norm{Y}=1$, we define
	\[
	M_Y=\{\varphi\in\St:\ \varphi(Y)=1\}.
	\]
	The set $M_Y$ is nonempty and weak$^*$-compact.

To prove Proposition \ref{prop:pure-identities}, we verify the following asymptotic formulas.
	
	\begin{lem}\label{lem:asymptotic}
		Let $Y\in\pos$ be with $\norm{Y}=1$, let $B\in\A_s$, and let $C\in\pos$.
		Set
		\[
		m=\sup_{\varphi\in M_Y}\varphi(B)(\in[-\norm{B},\norm{B}]).
		\]
		\begin{enumerate}[label=(\alph*)]
			\item Assume that there exists $t_0\ge 0$ such that $B+tY\in\pos$ for all $t\ge t_0$.
			Then we have
			\begin{equation}\label{eq:lim1}
				\lim_{t\to\infty}\bigl(\norm{B+tY}-t\bigr)=m.
			\end{equation}
			\item Assume that there exists $t_1\ge 0$ such that $C+t^2Y+tB\in\pos$ for all $t\ge t_1$.
			Then we have
			\begin{equation}\label{eq:lim2}
				\lim_{t\to\infty}\frac{\norm{C+t^2Y+tB}-t^2}{t}=m.
			\end{equation}
		\end{enumerate}
	\end{lem}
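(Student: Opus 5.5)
Both parts of Lemma~\ref{lem:asymptotic} will be proved by a lower bound obtained from a single well-chosen state, paired with an upper bound obtained from a compactness argument on $\St$ using \eqref{eq:norm-by-states}.

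\emph{Part (a), lower bound.} For $t\ge t_0$ the element $B+tY$ is positive, so \eqref{eq:norm-by-states} applies. For any $\varphi\in M_Y$ we get
\[
\norm{B+tY}\ge\varphi(B)+t,
\]
and taking the supremum over $M_Y$ yields $\liminf_{t\to\infty}(\norm{B+tY}-t)\ge m$.

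\emph{Part (a), upper bound.} For each $t$, pick a state $\varphi_t$ attaining the norm, so that
\[
\norm{B+tY}-t=\varphi_t(B)-t\bigl(1-\varphi_t(Y)\bigr)\le\varphi_t(B),
\]
since $\varphi_t(Y)\le\norm{Y}=1$. Combining with the lower bound, the left side is at least $m\ge-\norm{B}$. This forces
\[
t\bigl(1-\varphi_t(Y)\bigr)\le\varphi_t(B)+\norm{B}\le 2\norm{B},
\]
so $\varphi_t(Y)\to1$. Now take a sequence $t_n\to\infty$ along which $\varphi_{t_n}(B)$ converges to $\limsup_{t\to\infty}(\norm{B+tY}-t)$; more carefully, to a value at least that limsup, since $\norm{B+tY}-t\le\varphi_t(B)$. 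By weak$^*$-compactness, pass to a subnet converging to some state $\varphi$. Then $\varphi(Y)=1$, so $\varphi\in M_Y$, and $\varphi(B)=\lim\varphi_{t_n}(B)$. Hence $\limsup_{t\to\infty}(\norm{B+tY}-t)\le\varphi(B)\le m$.

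\emph{Part (b).} The same scheme applies to $\norm{C+t^2Y+tB}$. For the lower bound, use $\varphi\in M_Y$ together with $\varphi(C)\ge0$:
\[
\frac{\norm{C+t^2Y+tB}-t^2}{t}\ge\varphi(B)+\frac{\varphi(C)}{t}\ge\varphi(B).
\]
For the upper bound, let $\varphi_t$ be norming states. Then
\[
\frac{\norm{C+t^2Y+tB}-t^2}{t}=\frac{\varphi_t(C)}{t}+\varphi_t(B)-t\bigl(1-\varphi_t(Y)\bigr)\le\frac{\norm{C}}{t}+\varphi_t(B),
\]
and the lower bound again gives $t\bigl(1-\varphi_t(Y)\bigr)=O(1)$, hence $\varphi_t(Y)\to1$. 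The compactness and subnet argument then proceeds verbatim, with the term $\norm{C}/t$ vanishing in the limit.

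\emph{Main obstacle.} The only delicate point is the limit extraction: the limsup must be realized along a sequence, and then one passes to a weak$^*$ convergent subnet, using that $\varphi\mapsto\varphi(B)$ and $\varphi\mapsto\varphi(Y)$ are weak$^*$-continuous. This is routine once set up.
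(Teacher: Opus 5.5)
Your proof is correct and is essentially the paper's argument: the lower bound comes from evaluating at states in $M_Y$, and the upper bound comes from weak$^*$-compactness of $\St$, which produces a limit state lying in $M_Y$. The only difference is bookkeeping. You take a weak$^*$ cluster point of norming states $\varphi_t$, using the estimate $1-\varphi_t(Y)\le 2\norm{B}/t$ (resp.\ $O(1/t)$ in (b)) that follows from the pointwise lower bound, whereas the paper restricts the supremum to $K_\varepsilon=\{\varphi\in\St:\varphi(Y)\ge 1-\varepsilon\}$ for large $t$ and then shows $m_\varepsilon\downarrow m$ by the same compactness.
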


    \begin{proof}
		(a) For $t\ge t_0$, apply \eqref{eq:norm-by-states} to the positive element $B+tY$ and obtain
		\[
		\norm{B+tY}=\sup_{\varphi\in\St}\bigl(\varphi(B)+t\varphi(Y)\bigr),
		\]
		Therefore,
		\[
		\norm{B+tY}-t=\sup_{\varphi\in\St}\bigl(\varphi(B)+t(\varphi(Y)-1)\bigr)=:f(t), \quad t\geq t_0.
		\]
        On the one hand,  if $\varphi\in M_Y$, then $f(t)\ge \varphi(B)$ for all $t\ge t_0$, so
		$\liminf_{t\to\infty}f(t)\ge m$.
		
		To obtain a related inequality in the reverse direction, first fix $\varepsilon>0$ and set
		\[
		K_\varepsilon=\{\varphi\in\St:\varphi(Y)\ge 1-\varepsilon\},
		\]
		which is a weak$^*$-compact set. Denote
		\[
		m_\varepsilon=\sup_{\varphi\in K_\varepsilon}\varphi(B).
		\]
		Note that as $\varepsilon\downarrow 0$, the sets $K_\varepsilon$ decrease to $M_Y$, hence
		$m_\varepsilon$ is a decreasing net in $[-\norm{B},\norm{B}]$ and therefore it has a limit.
        
		If $\varphi\in \St \setminus K_\varepsilon$, then $\varphi(Y)< 1-\varepsilon$ and thus
		\[
		\varphi(B )+t(\varphi(Y)-1)\le \norm{B}-\varepsilon t.
		\]
		Since $m_\varepsilon\ge -\|B\|$, choosing $t\ge 2\|B\|/\varepsilon$ ensures
		\[
		\norm{B}-\varepsilon t\le -\|B\|\le m_\varepsilon.
		\]
		Therefore, for all $t\ge \max\{t_0,2\|B\|/\varepsilon\}$, the inequality
        \[
		\varphi(B)+t(\varphi(Y)-1)\le m_\varepsilon
		\]
        holds for any $\varphi\in \St \setminus K_\varepsilon$ as well as for any $\varphi\in K_\varepsilon$, hence we have $
		f(t)\le m_\varepsilon$ and
		consequently $\limsup_{t\to\infty}f(t)\le m_\varepsilon$.
		
		Finally, since $K_\varepsilon\downarrow M_y$ and each $K_\varepsilon$ is weak$^*$-compact, we can choose
		$\varphi_\varepsilon\in K_\varepsilon$ with $\varphi_\varepsilon(B)=m_\varepsilon$ and take a weak$^*$-convergent subnet of
		$(\varphi_\varepsilon)$ converging to some $\varphi\in\St$. We necessarily have $\varphi\in M_Y$. Therefore,
		the corresponding subnet of $(m_\varepsilon)$ converges to $\varphi(B)\le m$. Since $m\le m_\varepsilon$ holds for all $\varepsilon>0$, we conclude $m_\varepsilon\downarrow m$,
		so $\limsup_{t\to\infty}f(t)\le m$. This, together with $\liminf_{t\to\infty}f(t)\geq m$ proves \eqref{eq:lim1}.
		
		(b) For $t\ge t_1$, apply \eqref{eq:norm-by-states} to $C+t^2Y+tB\in\pos$ to obtain
		\[
		\frac{\norm{C+t^2Y+tB}-t^2}{t}
		=\sup_{\varphi\in\St}\left(\varphi(B)+t(\varphi(Y)-1)+\frac{\varphi(C)}{t}\right)=:g(t).
		\]
		On the one hand, if $\varphi\in M_Y$, then $g(t)\ge \varphi(B)+\frac{\varphi(C)}{t}$, hence
		$\liminf_{t\to\infty}g(t)\ge m$.
		
        Next, fix $\delta>0$ and set
		\[
		K_\delta=\{\varphi\in\St:\varphi(Y)\ge 1-\delta\},\quad
		m_\delta=\sup_{\varphi\in K_\delta}\varphi(B).
		\]
		Choose $\varphi_0\in M_Y$ with $\varphi_0(B)\ge m-\delta$.
		Then for all $t\geq t_1$, we have
		\[
		g(t)\ge \varphi_0(B)+\frac{\varphi_0(C)}{t}\ge m-\delta-\frac{\norm{C}}{t}.
		\]
		If $\varphi\in \St\setminus K_\delta$, then $\varphi(Y)< 1-\delta$ and
		\[
		\varphi(B)+t(\varphi(Y)-1)+\frac{\varphi(C)}{t}
		\le \norm{B}-\delta t+\frac{\norm{C}}{t}.
		\]
		Since the right-hand side tends to $-\infty$ as $t\to\infty$, for all sufficiently large $t$,
		the supremum defining $g(t)$ is attained in $K_\delta$, and hence
		\[
		g(t)\le m_\delta+\frac{\norm{C}}{t}.
		\]
		Thus, $\limsup_{t\to\infty}g(t)\le m_\delta$. As $\delta\downarrow 0$, the same compactness argument as in the proof of (a)
		gives $m_\delta\downarrow m$, and we obtain \eqref{eq:lim2}.
	\end{proof}

To what comes next, we note the following. Clearly, the definition \eqref{E:6} of the Kubo-Ando geometric mean can be straightforwardly extended for any positive (semidefinite) $B$ and then the same holds for the Wasserstein mean \eqref{eq:sig3}.
It is an easy calculation to show that 
\[
4(A\sigthree B)=A^{-1/2}(A+(A^{1/2} BA^{1/2})^{1/2})^2 A^{-1/2}\in \A^+
\]
for any $A\in \App$ and $B\in \A^+$.
    Let us now introduce the following notation.
    Given $A\in\Ap$, for $E\in\pos$ with $\norm{E}=1$, set
	\[
	A_E=A(A^{-1}\Geomean E)+(A^{-1}\Geomean E)A.
	\]
	Note that $A_E$ is always self-adjoint, and, for each $t>0$, the element $A+t^2E+tA_E=4(A\sigthree (t^2E))$ is positive (semidefinite).

We now formulate the following ingredient of our argument to prove Proposition \ref{prop:pure-identities}.
    
	\begin{prop}\label{prop:E2E3}
		Let $A\in\Ap$ and let $E\in\pos$ be with $\norm{E}=1$.
		\begin{enumerate}[label=(\alph*)]
			\item If $\norm{A\sigone X}=\norm{A\sigtwo X}$ holds for all $X\in\App$, then
			\begin{equation}\label{eq:E2}
				\sup_{\varphi\in M_E}\varphi(A^{1/2})=\sup_{\varphi\in M_E}\varphi(A\Geomean E).
			\end{equation}
			\item If $\norm{A\sigone X}=\norm{A\sigthree X}$ holds for all $X\in\App$, then
			\begin{equation*}\label{eq:E3}
				\sup_{\varphi\in M_E}\varphi(A^{1/2})=\frac12\sup_{\varphi\in M_E}\varphi(A_E).
			\end{equation*}
		\end{enumerate}
	\end{prop}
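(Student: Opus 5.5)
Fix $A$ and $E$. We plug $X=t^2E$ into the hypothesis, expand each side in the form $C+t^2Y+tB$, and compare the $t^{-1}$-asymptotics given by Lemma~\ref{lem:asymptotic}(b) with $Y=E$. Since $E$ is only positive semidefinite, we first apply the hypothesis to $X=t^2E+s\I\in\Ap$ with $s>0$. Each of the three means depends norm-continuously on $X$ on $\A^+$ (continuity of the functional calculus and of $A^{-1}\Geomean\cdot$), so letting $s\downarrow0$ gives
\[
\norm{A\sigone (t^2E)}=\norm{A\sigtwo (t^2E)}\quad\text{resp.}\quad \norm{A\sigone (t^2E)}=\norm{A\sigthree (t^2E)}
\]
for every $t>0$.

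Next we expand each side. Since $(t^2E)^{1/2}=tE^{1/2}$, we get
\[
4(A\sigone t^2E)=A+t^2E+t\bigl(A^{1/2}E^{1/2}+E^{1/2}A^{1/2}\bigr).
\]
By positive homogeneity of the geometric mean, $A\Geomean(t^2E)=t(A\Geomean E)$, so
\[
4(A\sigtwo t^2E)=A+t^2E+2t(A\Geomean E).
\]
As noted before the proposition, $4(A\sigthree t^2E)=A+t^2E+tA_E$. In all three cases $C=A\in\pos$, $Y=E$ with $\norm{E}=1$, the middle coefficient $B$ is self-adjoint, and the whole element is positive for every $t>0$. Hence Lemma~\ref{lem:asymptotic}(b) applies to each side, and equality of the norms for all $t$ gives equality of the limits:
\[
\sup_{\varphi\in M_E}\varphi\bigl(A^{1/2}E^{1/2}+E^{1/2}A^{1/2}\bigr)=\sup_{\varphi\in M_E}2\varphi(A\Geomean E)\quad\text{resp.}\quad=\sup_{\varphi\in M_E}\varphi(A_E).
\]

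It remains to identify the left-hand side with $2\sup_{M_E}\varphi(A^{1/2})$, and this is the step needing care. For $\varphi\in M_E$ we have $\varphi(E)=1=\norm{E}$. Then $\varphi(E^{1/2})=1$ as well: Cauchy--Schwarz gives $1=\varphi(E)\le\varphi(E^{1/2}\cdot E^{1/2})$, and $\varphi(E^{1/2})^2\le\varphi(E)$ together with $E\le E^{1/2}$ (because $0\le E\le\I$) yields $1=\varphi(E)\le\varphi(E^{1/2})\le1$. Now
\[
\varphi\bigl((\I-E^{1/2})^2\bigr)\le\varphi(\I-E^{1/2})=0,
\]
using $(\I-E^{1/2})^2\le \I-E^{1/2}$, which holds since $0\le E^{1/2}\le\I$. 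By the Cauchy--Schwarz inequality $|\varphi(Z(\I-E^{1/2}))|^2\le\varphi(ZZ^*)\varphi((\I-E^{1/2})^2)$, we get $\varphi(ZE^{1/2})=\varphi(Z)$ for all $Z\in\A$, and symmetrically $\varphi(E^{1/2}Z)=\varphi(Z)$. Taking $Z=A^{1/2}$ gives
\[
\varphi\bigl(A^{1/2}E^{1/2}+E^{1/2}A^{1/2}\bigr)=2\varphi(A^{1/2})
\]
for every $\varphi\in M_E$. Substituting this and dividing by $2$ yields \eqref{eq:E2} in case (a) and the stated identity in case (b).

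The main obstacle is the preliminary extension of the hypothesis from $\Ap$ to $t^2E\in\pos$, that is, verifying norm continuity of $\sigtwo$ and $\sigthree$ as $X$ moves in $\A^+$. Once that is in place, the rest is the routine asymptotic comparison above.
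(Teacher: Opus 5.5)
Your proposal is correct and follows the paper's proof essentially step for step. You extend the hypothesis to $X=t^2E\in\pos$ by a limit, compare the two sides via Lemma~\ref{lem:asymptotic}(b) with $C=A$, $Y=E$, and then use $\varphi(E^{1/2})=1$ and $\varphi(A^{1/2}E^{1/2}+E^{1/2}A^{1/2})=2\varphi(A^{1/2})$ for $\varphi\in M_E$. The only differences are cosmetic: you make the limit explicit via $X=t^2E+s\I$, and you re-derive inline what the paper takes from Lemmas~\ref{lem:phi-compression} and~\ref{lem:Me-sqrt} (your opening Cauchy--Schwarz remark there says nothing, but the argument via $E\le E^{1/2}$ that follows is sound).
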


The proof of this statement requires a bit of preliminary work, which is done in the next two lemmas.
	
	\begin{lem}\label{lem:phi-compression}
		Let $E\in\pos$ satisfy $0\le E\le \I$.
		If $\varphi\in\St$ satisfies $\varphi(E)=1$ (and hence $\|E\|=1$), then for every $X\in\A$ we have
		\[
		\varphi(X)=\varphi(EX)=\varphi(XE)=\varphi(EXE).
		\]
	\end{lem}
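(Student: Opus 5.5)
The plan is to use the Cauchy--Schwarz inequality for states applied to the positive element $\I-E$. Since $0\le E\le \I$, the element $\I-E$ is positive. The hypothesis $\varphi(E)=1=\varphi(\I)$ gives $\varphi(\I-E)=0$. The parenthetical claim $\norm{E}=1$ follows at once: $E\le\I$ gives $\norm{E}\le 1$, while $1=\varphi(E)\le\norm{E}$.

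The key step is to show $\varphi(X(\I-E))=0$ for every $X\in\A$. Since $\varphi$ is a positive linear functional, the Cauchy--Schwarz inequality $\abs{\varphi(Z^*W)}^2\le\varphi(Z^*Z)\varphi(W^*W)$ holds. Take $Z=X^*$ and $W=(\I-E)^{1/2}$. Then
\[
\abs{\varphi(X(\I-E)^{1/2})}^2\le \varphi(XX^*)\,\varphi(\I-E)=0.
\]
Applying this with $X(\I-E)^{1/2}$ in place of $X$ gives $\varphi(X(\I-E))=0$, that is, $\varphi(X)=\varphi(XE)$. Alternatively, one can apply Cauchy--Schwarz directly with $W=\I-E$ and use $(\I-E)^2\le \I-E$, which holds because $0\le \I-E\le\I$, so that $\varphi((\I-E)^2)\le\varphi(\I-E)=0$.

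For the left-hand version, use that $\varphi$ is Hermitian, i.e.\ $\varphi(Y^*)=\overline{\varphi(Y)}$. Then
\[
\varphi(EX)=\overline{\varphi(X^*E)}=\overline{\varphi(X^*)}=\varphi(X).
\]
Finally, combine the two identities: $\varphi(EXE)=\varphi(E(XE))=\varphi(XE)=\varphi(X)$. No step here is a real obstacle; the only point needing care is justifying $\varphi((\I-E)^2)=0$ (or using the square-root trick above), and that is routine.
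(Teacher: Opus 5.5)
Your proof is correct and follows the paper's argument: the Cauchy--Schwarz inequality for $\varphi$ applied against $\I-E$ to kill the one-sided term (your alternative via $(\I-E)^2\le\I-E$ is exactly the paper's step, and the $(\I-E)^{1/2}$ factorization is an equally valid variant), followed by Hermitian symmetry of $\varphi$ to pass to the other side and then $\varphi(EXE)=\varphi(XE)$. The only cosmetic difference is that you first show $\varphi(X(\I-E))=0$ and obtain the left-hand version by taking adjoints, whereas the paper does these in the opposite order.
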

	
	\begin{proof}
		Since $0\le E\le \I$ and $\varphi(E)=1$, we have $\varphi(\I-E)=0$.
		Moreover, $0\le (\I-E)^2\le \I-E$, hence $\varphi((\I-E)^2)=0$ holds as well.
		
		For any $X\in\A$, the Cauchy--Schwarz inequality for states yields
		\[
		|\varphi((\I-E)X)|^2
		=|\varphi((\I-E)^*X)|^2
		\le \varphi(X^*X)\, \varphi((\I-E)^*(\I-E))
		=\varphi(X^*X)\, \varphi((\I-E)^2)=0,
		\]
		so $\varphi((\I-E)X)=0$.
		Since the positive linear functionals are self-adjoint, we also have $\varphi(X(\I-E))=0$ for any $X\in \A$. These imply $\varphi(X)=\varphi(EX)$, and $\varphi(X)=\varphi(XE)$.
        Finally, using the above, we deduce
		$\varphi(EX)=\varphi(EXE)$ and $\varphi(XE)=\varphi(EXE)$.
	\end{proof}
	
	\begin{lem}\label{lem:Me-sqrt}
		Let $E\in\pos$ be with $\norm{E}=1$. Then
		\[
		M_{E^{1/2}}=M_E=M_{E^2}.
		\]
	\end{lem}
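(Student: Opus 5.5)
The plan is to show the two inclusions $M_{E^2}\subseteq M_E$ and $M_E\subseteq M_{E^2}$. Since $\norm{E^{1/2}}=1$ and $(E^{1/2})^2=E$, applying the same two inclusions to $E^{1/2}$ in place of $E$ then gives $M_E=M_{E^{1/2}}$ as well. Throughout, note that $\norm{E}=1$ and $E\ge 0$ give $0\le E\le \I$, and also $0\le E^2\le E\le \I$ and $E\le E^{1/2}\le \I$, by functional calculus on the spectrum in $[0,1]$.

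\textbf{Inclusion $M_E\subseteq M_{E^2}$.} Let $\varphi\in M_E$, so $\varphi(E)=1$. Lemma~\ref{lem:phi-compression} applies and gives $\varphi(X)=\varphi(EX)$ for every $X\in\A$. Taking $X=E$ yields $\varphi(E^2)=\varphi(E)=1$, hence $\varphi\in M_{E^2}$.

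\textbf{Inclusion $M_{E^2}\subseteq M_E$.} Let $\varphi(E^2)=1$. Since $E^2\le E\le\I$, positivity gives
\[
1=\varphi(E^2)\le\varphi(E)\le\varphi(\I)=1,
\]
so $\varphi(E)=1$.

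\textbf{The case $E^{1/2}$.} Apply both inclusions to $E^{1/2}$, which satisfies $0\le E^{1/2}\le\I$ and $\norm{E^{1/2}}=1$. This gives $M_{E^{1/2}}=M_E$. Directly, if $\varphi(E)=1$, then $E\le E^{1/2}\le\I$ forces $\varphi(E^{1/2})=1$. Conversely, if $\varphi(E^{1/2})=1$, Lemma~\ref{lem:phi-compression} applied to $E^{1/2}$ gives $\varphi(E)=\varphi(E^{1/2}E^{1/2})=1$.

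There is no real obstacle here. The only point needing care is justifying the operator inequalities $E^2\le E\le E^{1/2}\le\I$. These follow from $t^2\le t\le\sqrt t\le 1$ on $[0,1]\supseteq\sigma(E)$ together with the positivity of the continuous functional calculus.
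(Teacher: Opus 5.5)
Your proof is correct and uses the same two ingredients as the paper: the functional-calculus inequalities ($E\le E^{1/2}\le\I$, equivalently $E^2\le E\le\I$) for one inclusion, and Lemma~\ref{lem:phi-compression} for the other. The only difference is the order. The paper proves $M_{E^{1/2}}=M_E$ directly and gets $M_E=M_{E^2}$ by substitution, while you prove $M_E=M_{E^2}$ first and substitute $E^{1/2}$.
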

	
	\begin{proof}
    Obviously, it is sufficient to prove the first equality.
		Since $0\le E$ and $\|E\|=1$, we have $0\le E\le \I$ implying
		$E^{1/2}\ge E$.
		If $\varphi(E)=1$, then $1\le\varphi(E^{1/2})\le\norm{E^{1/2}}=1$, so $\varphi(E^{1/2})=1$.
		Conversely, if $\varphi(E^{1/2})=1$, then by the previous lemma, we have
		$\varphi(E)=\varphi(E^{1/2} E^{1/2})=1$.
	\end{proof}

    After this, we can now prove Proposition \ref{prop:E2E3}.
	
	\begin{proof}[Proof of Proposition \ref{prop:E2E3}]
		First observe that, by taking suitable limits, the norm equalities in the proposition that we assume for any positive definite $X$, hold also for any $X\in \A^+$. 
        
		(a) By the assumption, for $X=t^2E$, we have
		\begin{equation*}\label{eq:L2=R2}
			\norm{A+t^2E+t (A^{1/2}E^{1/2}+E^{1/2}A^{1/2})}=\norm{A+t^2E+2t(A\Geomean E)},\qquad t>0.
		\end{equation*}
		Applying Lemma~\ref{lem:asymptotic}, we get
		\[
        \sup_{\varphi\in M_E}\varphi(A^{1/2}E^{1/2}+E^{1/2}A^{1/2})=
         \sup_{\varphi\in M_E}2\varphi(A\Geomean E).
        \]
		By the previous two lemmas, for any $\varphi\in M_E$, we have $\varphi(E^{1/2})=1$ and
        $\varphi(A^{1/2}E^{1/2}+E^{1/2}A^{1/2})=2\varphi(A^{1/2})$,
        and then we obtain \eqref{eq:E2}.
		
		(b) We can apply essentially the same reasoning as in the proof of (a). 
	\end{proof}

    To prove Proposition \ref{prop:pure-identities}, we will need another tool, which is the so-called 
    excision of pure states. This is a technique that is a cornerstone in several areas of modern $C^*$-algebra theory.

    By \cite[Proposition~2.2]{AAP}, every pure state $\psi$ of $\A$ is excised by a net $(E_\lambda)$ of positive
		norm-one elements of $\A$ with $\psi(E_\lambda)=1$ meaning that 
		\[
		\lim_\lambda \|E_\lambda XE_\lambda-\psi(X)E_\lambda^2 \|=0
		\]
        holds for any $X\in \A$. 
        We immediately obtain from this that
		given a finite set $\mathcal F\subset\A$ and $\varepsilon>0$, there exists $E\in\pos$ with
		$0\le E\le\I$ and $\psi(E)=1$ such that
		\[
		\norm{EXE-\psi(X)E^2}<\varepsilon, \quad X\in \mathcal F.
		\]

We will need some observations.
    
	\begin{lem}\label{lem:state-close}
		Let $\psi$ be a pure state on $\A$, and let $E$ be as right above for a given finite set $\mathcal F\subset \A$.
		Then for every $\varphi\in M_E$ and every $X\in \mathcal F$, we have
		\[
		|\varphi(X)-\psi(X)|\le \norm{EXE-\psi(X)E^2}.
		\]
		In particular, if $\norm{EXE-\psi(X)E^2}<\varepsilon$ for all $X\in \mathcal F$, then
		$|\varphi(X)-\psi(X)|<\varepsilon$ for all $X\in \mathcal F$ and all $\varphi\in M_E$.
	\end{lem}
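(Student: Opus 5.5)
The plan is to combine Lemma~\ref{lem:phi-compression} with positivity and normalization of $\varphi$. Fix $\varphi\in M_E$ and $X\in\mathcal F$. Here $E$ is a positive element with $0\le E\le \I$ and $\psi(E)=1$, so $\norm{E}=1$. By Lemma~\ref{lem:Me-sqrt}, $\varphi\in M_E=M_{E^2}$, hence $\varphi(E^2)=1$.

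Lemma~\ref{lem:phi-compression}, applied to $\varphi$ and $E$, gives $\varphi(X)=\varphi(EXE)$. Together with $\varphi(E^2)=1$ this yields
\[
\varphi(X)-\psi(X)=\varphi(EXE)-\psi(X)\varphi(E^2)=\varphi\bigl(EXE-\psi(X)E^2\bigr).
\]
A state has norm one, so $|\varphi(Z)|\le\norm{Z}$ for every $Z\in\A$. Taking $Z=EXE-\psi(X)E^2$ gives the asserted inequality $|\varphi(X)-\psi(X)|\le\norm{EXE-\psi(X)E^2}$.

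The ``in particular'' part follows at once. If each $\norm{EXE-\psi(X)E^2}$ is less than $\varepsilon$, then so is $|\varphi(X)-\psi(X)|$, uniformly over $X\in\mathcal F$ and $\varphi\in M_E$.

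The only point needing care is to replace $\psi(X)$ by $\psi(X)\varphi(E^2)$, which requires $\varphi(E^2)=1$. This is exactly what Lemma~\ref{lem:Me-sqrt} supplies, or alternatively Lemma~\ref{lem:phi-compression} applied to $X=\I$, since $\varphi(E\I E)=\varphi(\I)=1$. So I do not expect a real obstacle.
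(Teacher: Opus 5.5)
Your proposal is correct and follows essentially the same route as the paper: Lemma~\ref{lem:phi-compression} gives $\varphi(X)=\varphi(EXE)$, Lemma~\ref{lem:Me-sqrt} gives $\varphi(E^2)=1$, and the bound then follows from $\|\varphi\|=1$. Your alternative way of getting $\varphi(E^2)=1$, by applying Lemma~\ref{lem:phi-compression} with $X=\I$, is also valid.
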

	
	\begin{proof}
		Let $\varphi\in M_E$, so $\varphi(E)=1$. By Lemma~\ref{lem:phi-compression}, we have
		$\varphi(X)=\varphi(EXE)$ for all $X\in\A$, and, by Lemma \ref{lem:Me-sqrt}, we also have $\varphi(\psi(X)E^2)=\psi(X)\varphi(E^2)=\psi(X)$.
		Thus
		\[
		|\varphi(X)-\psi(X)|=|\varphi(EXE)-\varphi(\psi(X)E^2)|\le \norm{EXE-\psi(X)E^2}
		\]
        as claimed.
        \end{proof}

We will also need the following lemma.

\begin{lem}\label{L:1}
     Given $A\in \App$, $E\in \A^+$, and real numbers $\lambda, \epsilon>0$, we have the following implication: 
     \[\| EA^{-1} E-\lambda E^2 \|\leq \epsilon  \enskip \Longrightarrow \enskip
\|E-\lambda^{1/2}A \Geomean E^2\|\leq \epsilon^{1/2} \|A\| \|A^{-1}\|^{1/2}.\]
\end{lem}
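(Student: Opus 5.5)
The idea is to conjugate everything by $A^{-1/2}$, so that both $E$ and $\lambda^{1/2}A\Geomean E^2$ become of the form $A^{1/2}(\,\cdot\,)A^{1/2}$ with positive middle factors. Their squares can then be compared directly with the hypothesis.

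Put
\[
P=A^{-1/2}EA^{-1/2}\in\pos,\qquad Q=\lambda^{1/2}\bigl(A^{-1/2}E^2A^{-1/2}\bigr)^{1/2}\in\pos .
\]
By the definition \eqref{E:6} of the geometric mean, $\lambda^{1/2}A\Geomean E^2=A^{1/2}QA^{1/2}$. Trivially $E=A^{1/2}PA^{1/2}$. Hence
\[
E-\lambda^{1/2}A\Geomean E^2=A^{1/2}(P-Q)A^{1/2},
\]
and therefore $\|E-\lambda^{1/2}A\Geomean E^2\|\le \|A\|\,\|P-Q\|$. It remains to show $\|P-Q\|\le \epsilon^{1/2}\|A^{-1}\|^{1/2}$.

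Next I compute the squares:
\[
P^2=A^{-1/2}EA^{-1}EA^{-1/2},\qquad Q^2=\lambda A^{-1/2}E^2A^{-1/2}.
\]
Their difference is
\[
P^2-Q^2=A^{-1/2}\bigl(EA^{-1}E-\lambda E^2\bigr)A^{-1/2},
\]
so the hypothesis gives $\|P^2-Q^2\|\le \|A^{-1}\|\,\epsilon$.

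The remaining, and main, step is the Hölder-type estimate $\|X^{1/2}-Y^{1/2}\|\le \|X-Y\|^{1/2}$ for $X,Y\in\pos$, applied with $X=P^2$ and $Y=Q^2$. I would prove it from operator monotonicity of the square root. With $\delta=\|X-Y\|$ we have $X\le Y+\delta\I$, hence $X^{1/2}\le (Y+\delta\I)^{1/2}$. Since $Y$ and $\delta\I$ commute, the scalar inequality $\sqrt{s+\delta}\le\sqrt s+\sqrt\delta$ applied via functional calculus gives $(Y+\delta\I)^{1/2}\le Y^{1/2}+\delta^{1/2}\I$. Thus $X^{1/2}-Y^{1/2}\le\delta^{1/2}\I$, and by symmetry $Y^{1/2}-X^{1/2}\le\delta^{1/2}\I$. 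For a self-adjoint element this two-sided bound yields $\|X^{1/2}-Y^{1/2}\|\le\delta^{1/2}$.

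Applying this gives $\|P-Q\|\le(\|A^{-1}\|\epsilon)^{1/2}$. Combined with the first reduction, this is exactly the claimed bound $\epsilon^{1/2}\|A\|\,\|A^{-1}\|^{1/2}$.
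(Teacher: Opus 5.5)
Your proof is correct and follows the paper's argument: your $P$ is the paper's $B=A^{-1/2}EA^{-1/2}$, your $Q$ is $\lambda^{1/2}(BAB)^{1/2}$, and the two conjugation estimates (by $A^{-1/2}$, then by $A^{1/2}$) are the same. The only difference is that the paper cites the Ando--Kittaneh--Kosaki inequality $\|f(X)-f(Y)\|\le f(\|X-Y\|)$ for $f(t)=t^{1/2}$, whereas you prove that special case directly from operator monotonicity of the square root and the scalar inequality $\sqrt{s+\delta}\le\sqrt{s}+\sqrt{\delta}$, a valid self-contained argument in any $C^*$-algebra.
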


\begin{proof}
    Indeed, define $B=A^{-1/2} E A^{-1/2}$. Then $E=A^{1/2}BA^{1/2}$ and we have
    $$\|A^{1/2} B^2 A^{1/2}-\lambda A^{1/2} BABA^{1/2}\|= \| EA^{-1} E-\lambda E^2 \|\leq \epsilon.$$ Multiplying by $\Vert A^{-1/2}\Vert$ from both sides, this implies
    $$\|B^2-\lambda BAB\|\leq \epsilon \|A^{-1}\|.$$
    By the Ando-Kittaneh-Kosaki inequality,
    (Theorem 1 in \cite{A88} for matrices, Theorem 2.3 in \cite{KK} for Hilbert space operators),
    we have that for any operator monotone function $f$ on the real nonnegative half line satisfying $f(0)=0$, the inequality
    $\| f(A)-f(B)\|\leq f(\|A-B\|)$ holds for any positive operators $A,B$.
    Applying this for the square-root function, it follows that
    $$\|B-\lambda^{1/2}(BAB)^{1/2}\|\leq \epsilon^{1/2} \|A^{-1}\|^{1/2}.$$
    Multiplying by $\Vert A^{1/2}\Vert$ from both sides, 
    using $A^{1/2}BA^{1/2}=E$, $A^{1/2}(BAB)^{1/2}A^{1/2}=A\Geomean E^2$,
    we deduce that
    $$\|E-\lambda^{1/2}A \Geomean E^2\|\leq \epsilon^{1/2} \|A\| \|A^{-1}\|^{1/2}.$$
    \end{proof}

    After all these preliminaries, we are now in a position to prove Proposition \ref{prop:pure-identities}.

\begin{proof}[Proof of Proposition \ref{prop:pure-identities}]
Given an arbitrary pure state $\psi$ on $\A$ and $\epsilon>0$,
let $0\leq E\leq \I$ be such that $\psi(E)=1$, 
$$\| EA^{1/2} E-\psi(A^{1/2})E^2\|\leq \epsilon \enskip \text{ and } \enskip \| EA^{-1} E-\psi(A^{-1}) E^2 \|\leq \epsilon.$$
Then, on the one hand, by Lemma \ref{lem:state-close}, for any $\varphi\in M_E$ we have
$|\varphi(A^{1/2})-\psi(A^{1/2})|\leq \epsilon$ 
which implies
\begin{equation}\label{E:1}
|\sup_{\varphi\in M_E} \varphi(A^{1/2})-\psi(A^{1/2})|\leq \epsilon.
\end{equation}
On the other hand, for $\lambda=\psi(A^{-1})$, by Lemma \ref{L:1} we have
$\|E-\lambda^{1/2}A \Geomean E^2\|\leq \epsilon^{1/2} \|A\| \|A^{-1}\|^{1/2}$
implying 
$$|1-\lambda^{1/2}\sup_{\varphi\in M_E} \varphi( A\Geomean E^2)|\leq \epsilon^{1/2} \|A\| \|A^{-1}\|^{1/2}.$$
It follows that
\begin{equation}\label{E:2}
|\lambda^{-1/2}-\sup_{\varphi\in M_E} \varphi( A\Geomean E^2)|\leq \epsilon^{1/2} \lambda^{-1/2}\|A\| \|A^{-1}\|^{1/2}.
\end{equation}
By Proposition \ref{prop:E2E3} and Lemma \ref{lem:Me-sqrt}, we have
\begin{equation*}
\sup_{\varphi\in M_E}\varphi(A\Geomean E^2)=\sup_{\varphi\in M_{E^2}}\varphi(A\Geomean E^2)=
				\sup_{\varphi\in M_{E^2}}\varphi(A^{1/2})=\sup_{\varphi\in M_{E}}\varphi(A^{1/2}).
			\end{equation*}
Using this together with \eqref{E:1}, \eqref{E:2}, we obtain
$$\vert \psi(A^{-1})^{-1/2}-\psi(A^{1/2})|\leq \epsilon+ \epsilon^{1/2}  \lambda^{-1/2} \|A\| \|A^{-1}\|^{1/2}.$$
Letting $\epsilon$ tend to 0, we obtain the desired equality \eqref{eq:pure2}.

To show the equality \eqref{eq:pure3}, given a pure state $\psi$ of $\A$ and
$\epsilon >0$, choose $0\leq E\leq \I$ such that 
\[\|EA^{1/2}E-\psi(A^{1/2})E^2\| \leq \epsilon \enskip \text{ and } \enskip
\| EAE-\psi (A)E^2\|\leq \epsilon.\]
As in the first part of the proof,
we have 
\begin{equation}\label{E:3}
|\sup_{\varphi \in M_E} \varphi(A^{1/2})-\psi(A^{1/2})|\leq \epsilon
\end{equation}
and for $\mu=\psi(A)$, using Lemma \ref{L:1}, we infer
$$\|E-\mu^{1/2}A^{-1} \Geomean E^2\|\leq \epsilon^{1/2} \|A^{-1}\| \|A\|^{1/2}.$$
We can easily deduce from this that
$$\|AE+EA -\mu^{1/2}(A (A^{-1} \Geomean E^2) + (A^{-1} \Geomean E^2) A)\|\leq 2\epsilon^{1/2} \|A^{-1}\| \|A\|^{3/2}.$$
For any $\varphi\in M_E$, this implies by Lemma \ref{lem:phi-compression} that 
\begin{equation*}
    |2\varphi(A)-\mu^{1/2}\varphi(A_{E^2})|\leq 2\epsilon^{1/2} \|A^{-1}\| \|A\|^{3/2}.
\end{equation*}
We also have $|\varphi(A)-\psi(A)| \le \| EAE-\psi (A)E^2\|\leq \epsilon$.
Hence, we can compute
$$
\begin{gathered}
|\varphi(A_{E^2})-2\mu^{1/2}|
\leq |\varphi(A_{E^2})-2\mu^{-1/2} \varphi(A)|+2\mu^{-1/2}|\varphi(A)-\psi(A)|\\ \leq 2\epsilon^{1/2} \mu^{-1/2}\|A^{-1}\| \|A\|^{3/2}+2\mu^{-1/2}\epsilon.
\end{gathered}
$$
It follows that
\begin{equation}\label{E:4}
\begin{gathered}
|(1/2) \sup_{\varphi\in M_E} \varphi(A_{E^2})-\mu^{1/2}|
\leq \epsilon^{1/2} \mu^{-1/2}\|A^{-1}\| \|A\|^{3/2}+\mu^{-1/2}\epsilon.
\end{gathered}
\end{equation}
By Proposition \ref{prop:E2E3} and Lemma \ref{lem:Me-sqrt}, we have
\begin{equation*}
(1/2)\sup_{\varphi\in M_E}\varphi(A_{E^2})=(1/2)\sup_{\varphi\in M_{E^2}}\varphi(A_{E^2})=
				\sup_{\varphi\in M_{E^2}}\varphi(A^{1/2})=\sup_{\varphi\in M_{E}}\varphi(A^{1/2}).
			\end{equation*}
Using \eqref{E:3} and \eqref{E:4}, we then deduce
\[
|\psi(A)^{1/2}-\psi(A^{1/2})|\leq  \epsilon^{1/2} \mu^{-1/2}\|A^{-1}\| \|A\|^{3/2}+\mu^{-1/2}\epsilon +\epsilon
\]
and letting 
$\epsilon$ tend to 0, we arrive at the desired equality $\psi(A)^{1/2}=\psi(A^{1/2})$.
\end{proof}

Putting all the above together, we have the proof of our theorem.

\begin{proof}[Proof of Theorem \ref{thm:main}]
 Applying Proposition \ref{prop:pure-identities} and Proposition \ref{P:1} (for $A^{-1}$ and $f(t)=t^{-1/2}$, respectively, for $A$ and $f(t)=t^{1/2}$), we obtain the statement.    
\end{proof}
	
\section{Order determining property}	

In the previous section, we considered the norm of means. Related to that, 
we conclude the paper with some remarks on the so-called order determining property of means.

For a (symmetric) mean $\sigma$ on the positive definite cone $\App$, we say that it is order determining on $\A$ if for any $A,B\in \App$, we have the following equivalence:
\begin{equation*}\label{E:OD}
    A\leq B \Longleftrightarrow \norm{A\sigma X}\leq \norm{B\sigma X}, \quad X\in \App.
\end{equation*}
The first author of this paper started investigating that property in \cite{ML21d}.
The work was completed in \cite{CH},
where it was proved  that any symmetric Kubo-Ando mean $\sigma$ is order determining on any $C^*$-algebra. So, $\sigtwo$ has the order determining property, i.e., we have the equivalence
\[
A\leq B \Longleftrightarrow \norm{A\sigtwo X}\leq \norm{B\sigtwo X}, \quad X\in \App.
\]
But what about the non Kubo-Ando means $\sigone$ and $\sigthree$?
As for $\sigone$, we easily have that $\norm{A\sigone X}\leq \norm{B\sigone X}$ holds for all $X\in \App$ if and only if $A^{1/2} \leq B^{1/2}$. The proof is easy, see, e.g., Lemma 2.6 in \cite{ML22d}. But $A^{1/2} \leq B^{1/2}$ is equivalent to $A\leq B$ exactly in commutative algebras, which is a well-known result of Ogasawara \cite{Oga}. Hence, $\sigone$ has the order determining property exactly in commutative $C^*$-algebras.
Nevertheless, the implication \[A\leq B \Longrightarrow \norm{A\sigone X}\leq \norm{B\sigone X}, \enskip X\in \App\] is valid in any $C^*$-algebra (because the square root function is operator monotone).

What concerns $\sigthree$, the situation is more complicated. 
Pick $A,B\in \App$ and a pure state $\psi$ on $\A$. Choose any $\delta>0$, and then select $\epsilon$ such that
\[
\epsilon^{1/2} \psi(A)^{-1/2}\|A^{-1}\| \|A\|^{3/2}+\psi(A)^{-1/2}\epsilon, \enskip \epsilon^{1/2} \psi(B)^{-1/2}\|B^{-1}\| \|B\|^{3/2}+\psi(B)^{-1/2}\epsilon<\delta
\]
Then, by the argument in the proof of Proposition \ref{prop:pure-identities} leading to the estimate \eqref{E:4}, we have that there is $0\leq E\leq \I$ such that
\[
|(1/2) \sup_{\varphi\in M_E} \varphi(A_{E^2})-\psi(A)^{1/2}|, |(1/2) \sup_{\varphi\in M_E} \varphi(B_{E^2})-\psi(B)^{1/2}|\leq \delta.
\]
Using the limit formula \eqref{eq:lim2} (and Lemma \ref{lem:Me-sqrt}), this gives us that  
   \[
   \Bigl \vert (1/2) \lim_{t\to \infty} \frac{\norm{4(A\sigthree (t^2E^2))}-t^2}{t}-\psi(A)^{1/2}\Bigr \vert \leq \delta,\quad 
   \Bigl \vert (1/2) \lim_{t\to \infty} \frac{\norm{4(B\sigthree (t^2E^2))}-t^2}{t}-\psi(B)^{1/2}\Bigr \vert \leq\delta.
   \]
   It then follows that if $\norm{A\sigthree X}\leq \norm{B\sigthree X}$ holds for any $X\in \App$, then $\psi(A)\leq \psi(B)$ holds for any pure state $\psi$ on $\A$. It implies that we have $\varphi(A)\leq \varphi(B)$ also for any state $\varphi$ on $\A$ from which we can conclude that $A\leq B$.
   Therefore, we have the implication
   \[
\norm{A\sigthree X}\leq \norm{B\sigthree X}, \enskip X\in \App  \Longrightarrow   A\leq B .  
   \]
But the converse, the other implication, is not true already in the case of the algebra of 2 by 2 matrices. Indeed, we have the following statement.

	\begin{prop}\label{prop:counterexample}
		Let
		\[
		A=\begin{pmatrix}8&6\\ 6&5\end{pmatrix},\quad
		B=\begin{pmatrix}14&5\\ 5&6\end{pmatrix},\quad
		X=\begin{pmatrix}2&-3\\ -3&8\end{pmatrix}.
		\]
		Then $A,B,X\in\Mkpp$, $A\le B$, but $\norm{A\sigma_w X}>\norm{B\sigma_w X}$.
	\end{prop}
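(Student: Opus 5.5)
The plan is a direct, explicit verification in $\mathbb M_2(\mathbb C)$. No general machinery is needed: everything reduces to closed-form $2\times 2$ computations, and the only real work is evaluating the Wasserstein means precisely enough to compare two spectral norms.

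First I will check the easy claims. $A$, $B$ and $X$ have positive traces and positive determinants ($\det A=4$, $\det B=59$, $\det X=7$), so they lie in $\Mkpp$. Moreover
\[
B-A=\begin{pmatrix}6&-1\\ -1&1\end{pmatrix}
\]
has trace $7$ and determinant $5$, hence $B-A$ is positive definite and $A\le B$.

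For the means I will use the definition \eqref{eq:sig3}, $4(A\sigthree X)=A+X+AG_A+G_AA$ with $G_A=A^{-1}\Geomean X$, and similarly with $G_B=B^{-1}\Geomean X$. To compute the geometric means I will use the standard closed formula for $2\times 2$ positive definite matrices $P,Q$ with $\alpha=\det P$ and $\beta=\det Q$:
\[
P\Geomean Q=\frac{\sqrt{\alpha\beta}}{\sqrt{\det\bigl(P/\sqrt\alpha+Q/\sqrt\beta\bigr)}}\Bigl(\frac{P}{\sqrt\alpha}+\frac{Q}{\sqrt\beta}\Bigr).
\]
This formula follows from Cayley--Hamilton applied to the square root of $P^{-1/2}QP^{-1/2}$. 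For $G_A$ we have $P=A^{-1}=\tfrac14\begin{pmatrix}5&-6\\-6&8\end{pmatrix}$ with $\alpha=1/4$, and $Q=X$ with $\beta=7$. For $G_B$ we have $P=B^{-1}=\tfrac1{59}\begin{pmatrix}6&-5\\-5&14\end{pmatrix}$ with $\alpha=1/59$. Each $G$ is then an explicit scalar multiple (involving $\sqrt7$, $\sqrt{59}$ and one further square root) of an explicit symmetric matrix.

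Next I will form the symmetric matrices $M_A=A+X+AG_A+G_AA$ and $M_B=B+X+BG_B+G_BB$. Their spectral norms are the largest eigenvalues,
\[
\lambda_{\max}(M)=\tfrac12\Bigl(\operatorname{tr}M+\sqrt{(\operatorname{tr}M)^2-4\det M}\Bigr).
\]
The claim reduces to $\lambda_{\max}(M_A)>\lambda_{\max}(M_B)$. As a cross-check I will also use the alternative expression $4(A\sigthree X)=A^{-1/2}\bigl(A+(A^{1/2}XA^{1/2})^{1/2}\bigr)^2A^{-1/2}$. Since the $2\times2$ geometric-mean formula is exact, the result is a closed-form algebraic expression; I will evaluate it with rigorous decimal bounds, say four significant digits, which is enough provided the gap is not tiny.

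The main obstacle is purely computational. The entries of $M_A$ and $M_B$ involve nested radicals, and the inequality between the two norms may hold only by a modest margin. Hence the numerical evaluation has to come with controlled error bounds, or the comparison has to be squared out into an exact polynomial inequality in the radicals. I would first try interval-style decimal estimates, with upper bounds for $\lambda_{\max}(M_B)$ and lower bounds for $\lambda_{\max}(M_A)$. If the margin is too thin, I would isolate the square roots and square the inequality to reach an integer comparison.
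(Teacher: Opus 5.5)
Your route is the paper's: evaluate $A\sigthree X$ and $B\sigthree X$ in closed form and compare their largest eigenvalues. The paper just reports the decimals, $\approx 6.4979$ versus $\approx 6.4459$. Your preliminary checks are correct. In fact $\det X=7$ is right, and the value $27$ printed in the paper is a misprint.

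\textbf{The gap.} The $2\times 2$ geometric-mean formula your whole computation rests on is wrong. With $\alpha=\det P$ and $\beta=\det Q$, the matrices $P/\sqrt\alpha$ and $Q/\sqrt\beta$ have determinant one. Homogeneity, $(sP')\Geomean(tQ')=\sqrt{st}\,(P'\Geomean Q')$, then forces the prefactor $\sqrt{\sqrt\alpha\sqrt\beta}=(\alpha\beta)^{1/4}$, not $\sqrt{\alpha\beta}$.
\begin{itemize}
\item Test $P=Q=cI$: your formula returns $c^2I$ instead of $cI$.
\item In general it gives $\det(P\Geomean Q)=\alpha\beta$ instead of $\sqrt{\alpha\beta}$.
\item The $\sqrt{\alpha\beta}$ version is the one valid under the convention $\det P=\alpha^2$, $\det Q=\beta^2$, normalizing by $P/\alpha$ and $Q/\beta$.
\end{itemize}
In your instance this rescales $G_A$ by $(7/4)^{1/4}\approx 1.15$ and $G_B$ by $(7/59)^{1/4}\approx 0.59$. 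The true gap between the two norms is only about $0.052$. So the matrices $M_A,M_B$ you would compute are not $4(A\sigthree X)$ and $4(B\sigthree X)$, and comparing their eigenvalues proves nothing. Your planned cross-check would expose the mismatch, but the main computation as stated is invalid.

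\textbf{The fix.} Either use the prefactor $(\alpha\beta)^{1/4}$, or bypass the geometric mean entirely. Note that
\[
A(A^{-1}\Geomean X)=(AX)^{1/2},\qquad (A^{-1}\Geomean X)A=(XA)^{1/2}=\bigl((AX)^{1/2}\bigr)^{T},
\]
and that, by Cayley--Hamilton, a $2\times2$ matrix $M$ with positive eigenvalues satisfies $M^{1/2}=\bigl(M+\sqrt{\det M}\,I\bigr)/\sqrt{\operatorname{tr}M+2\sqrt{\det M}}$. The needed data are:
\begin{itemize}
\item $AX+XA=\begin{pmatrix}-4&21\\21&44\end{pmatrix}$, $\operatorname{tr}(AX)=20$, $\det(AX)=28$;
\item $BX+XB=\begin{pmatrix}26&-10\\-10&66\end{pmatrix}$, $\operatorname{tr}(BX)=46$, $\det(BX)=413$.
\end{itemize}
These give exactly
\[
4(A\sigthree X)=A+X+\frac{1}{\sqrt{20+4\sqrt7}}\begin{pmatrix}4\sqrt7-4&21\\21&44+4\sqrt7\end{pmatrix},
\]
\[
4(B\sigthree X)=B+X+\frac{1}{\sqrt{46+2\sqrt{413}}}\begin{pmatrix}26+2\sqrt{413}&-10\\-10&66+2\sqrt{413}\end{pmatrix}.
\]
From here your interval-arithmetic plan goes through: a few correct digits suffice for a gap of $0.05$. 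It reproduces the paper's values and completes the proof.
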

	
	\begin{proof}
		First, $A,B,X\in\Mkpp$. Indeed, for real symmetric $2\times2$ matrices,
		positive definiteness is equivalent to the positivity of the $(1,1)$-entry and of the determinant.
		Here the $(1,1)$-entries are $8,14,2>0$ and
		\[
		\det A=4>0,\quad
		\det B=59>0,\quad
		\det X=27>0.
		\]
		Next, the matrix
		\[
		B-A=\begin{pmatrix}6&-1\\-1&1\end{pmatrix}
		\]
		has eigenvalues $\frac{7\pm\sqrt{29}}2>0$, hence $A\le B$.

        To avoid ambiguity from floating-point rounding, note that for a positive definite $2\times2$ matrix
		$M$ one has the explicit closed form for the square root
		\[
		\sqrt{M}=\frac{M+\sqrt{\det M}\,I}{\sqrt{\operatorname{tr}(M)+2\sqrt{\det M}}}.
		\]
		Then, evaluating the means yields
		\[
		A\sigma_w X \approx
		\begin{pmatrix}
			2.79759397 & 1.69933441\\
			1.69933441 & 5.71750120
		\end{pmatrix},
		\quad
		B\sigma_w X \approx
		\begin{pmatrix}
			5.78992584 & 0.23142304\\
			0.23142304 & 6.36423367
		\end{pmatrix}.
		\]
		Both matrices are positive definite, hence their operator norms equal their largest eigenvalues.
		We have
		\[
		\lambda_{\max}(A\sigma_w X)=6.4979051467161647730\ldots,\quad
		\lambda_{\max}(B\sigma_w X)=6.4458805065881512928\ldots,
		\]
		so that
		\[
		\norm{A\sigma_w X}-\norm{B\sigma_w X}=0.05202464012801348019\ldots>0.
		\]
	\end{proof}
	
Since $M_2(\mathbb C)$ can be isometrically embedded into any noncommutative von Neumann algebra (see, e.g., Lemma 3 in \cite{Jiang}), the next statement follows easily. 

\begin{prop}
    If $\A$ is a von Neumann algebra, the Wasserstein mean $\sigthree$ has the order determining property on $\A$ if and only if $\A$ is commutative. 
\end{prop}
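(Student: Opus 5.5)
Both directions rest on results already in hand: the order determining property in commutative algebras is essentially automatic, and noncommutativity lets us transport the counterexample of Proposition~\ref{prop:counterexample}.

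\emph{Commutative case.} Suppose $\A$ is a commutative von Neumann algebra. For commuting $A,X$ we have $A\sigthree X=A\sigone X$, since all versions of the Heron mean coincide on commuting pairs. We have already seen that $\sigone$ is order determining exactly in commutative $C^*$-algebras, because of the equivalence $\norm{A\sigone X}\le\norm{B\sigone X}$ for all $X$ $\Leftrightarrow$ $A^{1/2}\le B^{1/2}$ together with Ogasawara's theorem. Therefore $\sigthree=\sigone$ is order determining on $\A$. Alternatively, one can use directly the implication $\norm{A\sigthree X}\le\norm{B\sigthree X}\ (X\in\App)\Rightarrow A\le B$ proved above, together with the monotonicity of $\sigone$, for the converse direction.

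\emph{Noncommutative case.} Here we need to exhibit $A\le B$ in $\App$ and some $X\in\App$ with $\norm{A\sigthree X}>\norm{B\sigthree X}$. The plan is to use the cited fact (Lemma~3 in \cite{Jiang}) that $M_2(\mathbb C)$ embeds isometrically into $\A$. More precisely, we want a projection $P\in\A$ and a unital $^*$-isomorphism $\iota$ of $M_2(\mathbb C)$ onto $P\A P$; this is obtained via a system of $2\times 2$ matrix units $e_{ij}$ with $e_{11}+e_{22}=P$, which a noncommutative von Neumann algebra always admits. Take the matrices $A_0,B_0,X_0$ of Proposition~\ref{prop:counterexample} and set
\[
A=\iota(A_0)+c(\I-P),\quad B=\iota(B_0)+c(\I-P),\quad X=\iota(X_0)+c(\I-P),
\]
with a small constant $c>0$. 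These elements are positive definite, and $A\le B$ because $B-A=\iota(B_0-A_0)\ge0$.

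Since $P$ is central relative to this block decomposition and continuous functional calculus respects direct sums, every ingredient of $\sigthree$ (square roots, inverses, $\Geomean$) decomposes blockwise. Hence
\[
A\sigthree X=\iota(A_0\sigthree X_0)+c(\I-P),
\]
and similarly for $B$. Its norm is $\max\{\norm{A_0\sigthree X_0},c\}$. Choosing $c$ below both matrix norms (for instance $c=1$, since the largest eigenvalues are about $6.4$), we get $\norm{A\sigthree X}=\norm{A_0\sigthree X_0}>\norm{B_0\sigthree X_0}=\norm{B\sigthree X}$. So the order determining property fails.

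The main obstacle is making sure the embedding is of the required corner type, i.e.\ that it is a $^*$-homomorphism unital onto $P\A P$ and not merely isometric. I expect Lemma~3 of \cite{Jiang} gives exactly such matrix units. If it does not, I would construct them directly: a noncommutative von Neumann algebra has a non-abelian part, and a standard comparison of projections there gives two nonzero equivalent orthogonal projections. If $\I-P=0$, the argument simplifies by dropping the $c(\I-P)$ terms.
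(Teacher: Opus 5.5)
Your proof is correct and follows the paper's own route. In the commutative direction you use $\sigthree=\sigone$ on commuting pairs, or alternatively the general implication together with monotonicity of $\sigone$. In the noncommutative direction you transport the $2\times 2$ counterexample of Proposition~\ref{prop:counterexample} through an embedding of $M_2(\mathbb C)$, and your padding by $c(\I-P)$ supplies the detail the paper leaves implicit for a non-unital embedding. One small wording fix: the matrix units give a unital $^*$-homomorphism of $M_2(\mathbb C)$ \emph{into} $P\A P$, not in general onto it, but that is all your blockwise functional-calculus argument actually uses.
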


At the end of the second section of the paper, we made some remarks on traces of the three means under consideration. The order determining property of means can be straightforwardly defined for any norm, hence also for the trace norm corresponding to any faithful trace $\tau$ on $\A$.
The last result in the paper is the following.

\begin{prop}
Let $\A$ be a $C^*$-algebra carrying a faithful trace $\tau$. Select any $A,B\in \App$. The following assertions are true. 

(1) We have $\tau(A\sigtwo X)\leq \tau(B\sigtwo X)$, $X\in \App$ if and only if $A\leq B$.

(2)    We have $\tau(A\sigone X)\leq \tau(B\sigone X)$, $X\in \App$ if and only if $A^{1/2}\leq B^{1/2}$.

(3)   We have $\tau(A\sigthree X)\leq \tau(B\sigthree X)$, $X\in \App$ if and only if $A\leq B$.
\end{prop}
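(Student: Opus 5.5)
Using the trace property, each of the three traces splits into a term independent of $A$ plus a term linear in an explicit element built from $A$ and $X$. The plan is to compute these $A$-dependent parts and then test them against positive $X$.

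\emph{Case (2).} We have
\[
4\tau(A\sigone X)=\tau(A)+\tau(X)+2\tau(A^{1/2}X^{1/2}).
\]
If $A^{1/2}\le B^{1/2}$, then $A\le B$ fails in general, so the sufficiency must be handled directly. I will show that both $\tau(B-A)\ge 0$ and $\tau((B^{1/2}-A^{1/2})X^{1/2})\ge 0$ hold. The second is clear: $\tau(PY)=\tau(Y^{1/2}PY^{1/2})\ge 0$ for positive $P,Y$. For the first, write $B^{1/2}=A^{1/2}+D$ with $D\ge0$. Then
\[
B-A=A^{1/2}D+DA^{1/2}+D^2,
\]
and its trace is $2\tau(A^{1/2}D)+\tau(D^2)\ge0$.

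For necessity, substitute $X=t^2Y$ with $Y\in\App$, divide by $t^2$ and let $t\to\infty$. The leading term is $\tau(Y)$ on both sides, so we compare the $t$-coefficient and obtain $\tau(A^{1/2}Y^{1/2})\le\tau(B^{1/2}Y^{1/2})$. Since $Y^{1/2}$ ranges over all of $\App$, and by density over all positive elements, this gives $\tau((B^{1/2}-A^{1/2})P)\ge0$ for every $P\ge 0$. From this $B^{1/2}-A^{1/2}\ge0$ follows by a standard argument, which I will make precise using faithfulness. Take $P=C_-$, the negative part of the self-adjoint element $C=B^{1/2}-A^{1/2}$. Then $\tau(CC_-)=-\tau(C_-^2)\ge0$, so faithfulness forces $C_-=0$.

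\emph{Cases (1) and (3).} For (1),
\[
4\tau(A\sigtwo X)=\tau(A)+\tau(X)+2\tau(A\Geomean X).
\]
For sufficiency, $A\le B$ gives $A\Geomean X\le B\Geomean X$ by monotonicity of Kubo--Ando means, and the inequality follows. For (3), trace cyclicity gives
\[
\tau\bigl(A(A^{-1}\Geomean X)+(A^{-1}\Geomean X)A\bigr)=2\tau(A(A^{-1}\Geomean X))=2\tau\bigl((A^{1/2}XA^{1/2})^{1/2}\bigr),
\]
since $A^{-1}\Geomean X=A^{-1/2}(A^{1/2}XA^{1/2})^{1/2}A^{-1/2}$. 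Hence
\[
4\tau(A\sigthree X)=\tau(A)+\tau(X)+2\tau(|X^{1/2}A^{1/2}|)=\tau(A)+\tau(X)+2\tau(|A^{1/2}X^{1/2}|).
\]
Sufficiency in (3) then reduces to showing that $A\le B$ implies
\[
\tau\bigl((X^{1/2}AX^{1/2})^{1/2}\bigr)\le\tau\bigl((X^{1/2}BX^{1/2})^{1/2}\bigr).
\]
This holds because $X^{1/2}AX^{1/2}\le X^{1/2}BX^{1/2}$ and $t\mapsto t^{1/2}$ is operator monotone.

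For necessity in both (1) and (3), I will avoid the large-$t$ scaling, since it only recovers the order of the square roots. Instead I will use small $X$: put $X=\varepsilon Y$ and let $\varepsilon\to0$. The inequality then becomes $\tau(A)+o(1)\le\tau(B)+o(1)$, which only gives $\tau(A)\le\tau(B)$ and is too weak. The better approach is to localize with nonscalar $X$.

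The plan for necessity in (1) is to choose $X$ so that the term $\tau(A\Geomean X)$ is controlled. Take $X=P^{-}$-type elements, for instance $X=\delta\I+tQ$ with $Q$ a spectral-type positive element of $B-A$, and use the $C^*$-functional calculus inside the commutative subalgebra generated by $B-A$. I expect this localization to be the main obstacle: the $A$-free term $\tau(X)$ cancels, but the geometric-mean term does not split. My first attempt will be the expansion $A\Geomean(sA+H)$ for small $H$, whose derivative at $X=sA$ is explicit. Combined with homogeneity, this should isolate $\tau(A)$ against test elements $K$ entering linearly, ultimately yielding $\tau((B-A)K)\ge0$ for all $K\ge0$. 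Faithfulness then gives $A\le B$ exactly as in case (2). The same strategy, expanding $|X^{1/2}A^{1/2}|$, applies to (3).
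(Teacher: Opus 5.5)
\paragraph{Where the proposal falls short.} Your argument for (2), and for the sufficiency halves of (1) and (3), is correct. It is also more self-contained than the paper, which simply invokes Lemma 10 (with $f(t)=-1/\sqrt t$) and Lemma 8 (with $\alpha=1$ and $\alpha=1/2$) of \cite{ML19c}.

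The gap is the necessity direction in (1) and (3). What you give there is a plan, and the plan cannot work as described:
\begin{itemize}
\item At $X=sA$ the assumed inequality is in general strict. For (1), the two sides of $4\tau(A\sigtwo X)\le 4\tau(B\sigtwo X)$ are $(1+\sqrt s)^2\tau(A)$ and $\tau(B)+s\tau(A)+2\sqrt s\,\tau(A\Geomean B)$. So comparing first-order terms in $H$ yields no constraint.
\item The derivative of $X\mapsto B\Geomean X$ at $sA$ is not explicit in any case.
\item Test elements taken from $C^*(B-A)$ commute with neither $A$ nor $B$, so they do not make either geometric mean computable.
\end{itemize}
Nothing in the sketch actually produces $\tau((B-A)K)\ge0$.

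\paragraph{The missing idea.} It is precisely the large-$t$ scaling you discarded, and your reason for discarding it is wrong. (The ``homogeneity'' you mention is the same device.) Put $X=t^2Y$, subtract the common $t^2\tau(Y)$, divide by $t$ and let $t\to\infty$. For all $Y\in\App$ this gives
\begin{itemize}
\item in (1): $\tau(A\Geomean Y)\le\tau(B\Geomean Y)$;
\item in (3): $\tau((A^{1/2}YA^{1/2})^{1/2})\le\tau((B^{1/2}YB^{1/2})^{1/2})$.
\end{itemize}
In the noncommutative case these are not square-root order conditions, and they do force $A\le B$. This is what the lemmas cited by the paper supply.

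To see it directly, put $S=A^{-1/2}BA^{-1/2}$.
\begin{itemize}
\item In (1), take $Y=A^{1/2}ZA^{1/2}$ with $Z$ a positive invertible element of $C^*(S,\I)$. By congruence invariance, $A\Geomean Y=A^{1/2}Z^{1/2}A^{1/2}$ and $B\Geomean Y=A^{1/2}(S\Geomean Z)A^{1/2}=A^{1/2}S^{1/2}Z^{1/2}A^{1/2}$. Hence $\tau(AZ^{1/2}(S^{1/2}-\I))\ge0$.
\item In (3), take $Y=A^{-1/2}W^2A^{-1/2}$ with $W\in C^*(S,\I)$ positive invertible. Use $\tau(|T|)=\tau(|T^*|)$ for $T=WA^{-1/2}B^{1/2}$. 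The two sides become $\tau(W)$ and $\tau((WSW)^{1/2})=\tau(WS^{1/2})$, so $\tau(W(S^{1/2}-\I))\ge0$.
\end{itemize}
Now let $Z^{1/2}$, respectively $W$, tend in norm to $(\I-S^{1/2})_+$; by continuity of $\tau$ this gives $-\tau(A^{1/2}(\I-S^{1/2})_+^2A^{1/2})\ge0$, respectively $-\tau((\I-S^{1/2})_+^2)\ge0$. Faithfulness of $\tau$ and invertibility of $A$ then give $(\I-S^{1/2})_+=0$. Thus $S\ge\I$, that is, $A\le B$, exactly as in your case (2).
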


\begin{proof}
The statements follow from certain results in \cite{ML19c}. Namely, as for $\sigtwo$, choose $f(t)=-1/\sqrt{t}$ in Lemma 10. 
And as for $\sigone$, $\sigthree$, choose $\alpha=1$ and $\alpha=1/2$, respectively, in Lemma 8 in that paper.
\end{proof}

\end{document}